\newtheorem*{rep@theorem}{\rep@title}
\newcommand{\newreptheorem}[2]{%
\newenvironment{rep#1}[1]{%
 \def\rep@title{#2 \ref{##1}}%
 \begin{rep@theorem}}%
 {\end{rep@theorem}}}
\newtheorem{theorem}{Theorem}
\newtheorem{lemma}[theorem]{Lemma}
\newtheorem{example}[theorem]{Example}
\newtheorem{remark}[theorem]{Remark}
\newtheorem{definition}[theorem]{Definition}
\title{Super--character theory and comparison arguments for a random walk on the upper triangular matrices}
\author{Evita Nestoridi}
\date{}
\begin{document}

\maketitle

\section{Introduction}
Recently, there has been a lot of interest in the mixing time of a specific random walk on upper triangular matrices (\cite{Andre2}, \cite{StanleyD}, \cite{Coppersmith}, \cite{Laurent}, \cite{Hough}, \cite{Pak}, \cite{Peres},  \cite{GM}, \cite{Stong}). Let $p$ be an odd prime and let $G$ be the group of $n \times n$ upper triangular matrices with $1$'s on the diagonal and elements of $\mathbb{Z}/p \mathbb{Z}$ above the diagonal. Let $E(i,i+1)$ be the  $n \times n$ matrix having a one  in the $(i,i+1)$ entry and zeros elsewhere. The set $S=\{I_n \pm E(i,i+1), 1\leq i \leq n-1\}$ is a symmetric generating set for $G$. We consider the random walk on $G$ using these generators, namely we let
\begin{equation}\label{initial}
P_x(xg)= \begin{cases} 
\frac{1}{2(n-1)}, & \mbox{ if } g= I_n \pm E(i,i+1) ,   \\ 0, & \mbox{otherwise,}
\end{cases}
\end{equation}
be the probability of moving from $x$ to $xg$ in one step. The $t^{th}$ convolution of $P$ is defined inductively as
$$P^{*t}_x(y)= \sum_{w \in G} P^{*{(t-1)}}_w(y)P_x(w)$$
and it gives the probability of moving form $x$ to $y$ in $t$ steps. According to Proposition $2.13$ and Theorem $4.9$ of \cite{Peresbook}, the fact that $S$ is a symmetric set of generators guarantees that $P_{x}^{*t}$ converges to the uniform measure $\mu$ on $G$ with respect to the total variation distance, which is defined as
$$ ||P^{*t}_{x}- \mu||_{T.V.}  := \frac{1}{2} \sum_{g \in G} |P^{*t}_{x}(g)- \mu(g)|.$$
The main result of this paper concerns the mixing time of the  above walk with respect to the total variation distance, i.e.
$$t_{ mix}(\varepsilon)= \inf \lbrace t \in \mathbb{Z}  : \max_{x \in G} \{ ||P^{*t}_{x}- \mu||_{T.V.} \} < \varepsilon \rbrace,$$
where $\mu$ is the uniform measure on $G$. 
\begin{theorem}\label{main}
There exist universal constants $0<b,d< \infty$ such that  for $c>0$ and $t\geq c b p^2n^4$, we have that
$$4||P^{*t}- \mu||^2_{T.V}\leq  d e^{-c},$$
for $p$ sufficiently large.
\end{theorem}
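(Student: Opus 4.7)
\textbf{Proof plan for Theorem~\ref{main}.}

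My plan is to combine the super-character theory of Andr\'e and Yan (as systematized by Diaconis--Isaacs) with a Diaconis--Saloff-Coste comparison argument. The ordinary irreducible characters of the unitriangular group $G=U_n(\mathbb{F}_p)$ are wild for large $n$, but the super-characters---indexed by $\mathbb{F}_p^\times$-labeled set partitions of $\{1,\dots,n\}$---are explicit, constant on super-classes, and satisfy orthogonality properties sufficient to run an upper bound lemma of Diaconis--Shahshahani type. The main obstruction is that this upper bound lemma applies cleanly only when the driving measure is a super-class function, which the measure $P$ defined in~\eqref{initial} is not.

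Accordingly, I would first introduce an auxiliary walk $Q$ driven by a super-class function, for instance the uniform distribution on the super-classes containing the generators $I_n\pm E(i,i+1)$ for $1\le i\le n-1$. For this $Q$ the super-character operator $\widehat{Q}(\lambda)$ is a scalar, computable in closed form from the combinatorics of the labeled partition $\lambda$, and the upper bound lemma yields
$$4\|Q^{*t}-\mu\|_{T.V.}^2\;\leq\;\sum_{\lambda\neq\emptyset}m(\lambda)\,\bigl|\widehat{Q}(\lambda)\bigr|^{2t},$$
where $m(\lambda)$ is a multiplicity involving $\chi^\lambda(1)$. The short arcs $(i,i+1)$ appearing in $\lambda$ are the main obstruction to mixing and contribute cosine-type eigenvalues close to $1$, producing the $p^2$ factor in the mixing time; longer arcs contribute smaller eigenvalues that are absorbed into the combinatorial sum.

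Second, I would use the Diaconis--Saloff-Coste comparison technique to transfer the mixing estimate from $Q$ to $P$. Writing each element of $\operatorname{supp}(Q)$ as a product of elements of $S=\{I_n\pm E(i,i+1)\}$ with controlled word length and multiplicity produces an inequality relating the Dirichlet forms of $P$ and $Q$, and hence their spectral gaps. Since representatives of the relevant super-classes can be built from iterated commutators and conjugates of the nearest-neighbor generators in $S$, the word lengths required are polynomial in $n$, and the resulting comparison loss converts the $Q$-mixing time into the claimed $p^2n^4$ mixing bound for $P$.

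The main obstacle I expect is the comparison step: one must express the super-class representatives appearing in $\operatorname{supp}(Q)$ as short words in $S$, uniformly in $n$ and $p$, with tight control on the multiplicities used in the flow estimate. In parallel, summing the super-character bound over all $\mathbb{F}_p^\times$-labeled set partitions of $\{1,\dots,n\}$ requires sharp arc-by-arc estimates of the eigenvalues $\widehat{Q}(\lambda)$, so that the exponential decay in $t$ dominates the combinatorial growth in the number of partitions of each shape. This is the sort of bookkeeping that super-character theory handles cleanly in principle, but where constants must be tracked carefully in order to obtain universal values of $b$ and $d$.
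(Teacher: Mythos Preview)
Your overall architecture---super-character analysis for an auxiliary super-class walk $Q$, followed by Diaconis--Saloff-Coste comparison to transfer to $P$---is exactly the paper's strategy. But as described, your proposal reproduces the Arias-Castro--Diaconis--Stanley argument and would \emph{not} yield the $p^2n^4$ bound; it misses the two specific innovations that the paper introduces to sharpen the $p$-dependence.

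First, your choice of $Q$ (uniform on the super-classes of $I_n\pm E(i,i+1)$, i.e.\ on $\bigcup_i C_i(\pm1)$) gives cosine eigenvalues $\cos(2\pi x/p)$ and hence a $Q$-mixing time of order $p^2 n\log n$, as you note. The paper instead enlarges the support of $Q$ to include the conjugacy classes $C_i(\pm a)$ with $a\approx\sqrt{p}$; the eigenvalues then become $\tfrac12\cos(2\pi x/p)+\tfrac12\cos(2\pi xa/p)$, and the underlying $\mathbb{Z}/p\mathbb{Z}$ walk with steps $\pm1,\pm a$ mixes in order $p$, not $p^2$. This brings the $Q$-mixing time down to $O(pn\log n)$.

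Second, your statement that the comparison word lengths are ``polynomial in $n$'' is incorrect: the super-class $C_i(\pm1)$ already consists of matrices with $n-2$ arbitrary entries in $\mathbb{F}_p$, and building an arbitrary entry $b\in\mathbb{F}_p$ from the generators $I\pm E(j,j+1)$ requires a $p$-dependent number of steps. The naive path uses $O(b)$ steps, giving $A=O(p^2n^2)$; the paper's second innovation is the identity $I+bE(i,i+2)=[I+\lfloor\sqrt b\rfloor E(i+1,i+2),\,I-\lfloor\sqrt b\rfloor E(i,i+1)]\cdot(\text{correction})$, which builds this element in $O(\sqrt b)$ steps and yields $A=O(pn^2)$.

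With your $Q$ and either path estimate, the comparison gives at best $A\cdot(\text{$Q$-mixing})\sim pn^2\cdot p^2n\log n=p^3n^3\log n$, which exceeds $p^2n^4$ once $p\gg n$. Both refinements---the $\pm\sqrt p$ steps in $Q$ and the $\sqrt b$ commutator trick in the paths---are needed simultaneously to reach $p^2n^4$.
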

The dependence on $p$ is the best possible, since the entry in position $(n-1,n)$ performs a simple random walk on $\mathbb{Z}/p \mathbb{Z}$ and the mixing time of this random walk is of order $p^2$ as explained in \cite{Threads}.

The proof of Theorem \ref{main} relys on bounding the eigenvalues $1=\lambda_0 > \lambda_1 \geq \ldots \geq \lambda_{|G|-1}>-1 $ of the transition matrix $:=(P_x(y))_{x,y \in G}$ and using the inequality 
$$4||P^{*t}- \mu||^2_{T.V}\leq \sum_{i=1}^{|G|-1} \lambda^{2t}_i$$
(see Lemma 12.16 (ii)of \cite{Peresbook}).
To bound the eigenvalues of $P$, we introduce an auxiliary  random walk $Q$ on $G$, which we study using the super--character theory of $G$. Then, we use comparison theory as introduced by Diaconis and Saloff-Coste \cite{Compar} to bound the eigenvalues of $P$ in terms of the eigenvalues of $Q$.

This new walk $Q$ is defined as follows. Let $$a=\begin{cases} \lfloor \sqrt{p} \rfloor, & \mbox{ if } \lfloor \sqrt{p} \rfloor \mbox{ is odd,}\\
 \lfloor \sqrt{p} \rfloor +1, & \mbox { otherwise,} \end{cases}$$
 be the closest odd integer to $\lfloor \sqrt{p} \rfloor$.
Define the following probability measure on $G$:
\begin{equation}\label{random_2}
Q_x(xg)= \begin{cases} \frac{1}{4(n-1)p^{n-2}}, & \mbox{ if } g \in C_i(\pm 1) \cup C_i(\pm a)    \\ 0, & \mbox{ otherwise,}
\end{cases}
\end{equation}
where $C_i(\pm 1)$ denotes the conjugacy class of the matrix $I_n \pm E(i,i+1)$ and $ C_i(\pm a) $ denotes similarly  the conjugacy class of the matrix $I_n \pm  a E(i,i+1)$.

 \begin{theorem}\label{rootp}
 There exist uniform constants $0<\alpha, \beta< \infty $ such that for $c>0$ and $t= c \beta p n \log n$, then
 $$4||Q^{*t}- \mu||^2_{T.V.} \leq  \alpha e^{-c} .$$
 \end{theorem}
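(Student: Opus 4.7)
The plan is to exploit that $Q$ is a class measure (constant on conjugacy classes of $G$) and analyze it via the super-character theory of $G=U_n(\mathbb{F}_p)$ due to Andr\'e--Yan and Diaconis--Isaacs.  Supercharacters $\chi^\lambda$ of $G$ are indexed by $\mathbb{F}_p^*$-labeled set partitions $\lambda=(D,\phi)$ of $\{1,\ldots,n\}$; they are constant on super-classes and partition the irreducible characters of $G$ into blocks $S_\lambda$.  Since each conjugacy class $C_i(\pm 1)$ and $C_i(\pm a)$ lies inside a single super-class, $Q$ is super-class invariant, so every irreducible in $S_\lambda$ shares the same Fourier eigenvalue $\hat Q(\lambda)$.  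The Diaconis--Shahshahani upper bound lemma therefore collapses to the super-character bound
\[
4\|Q^{*t}-\mu\|_{T.V.}^{2}\;\le\;\sum_{\lambda\ne\emptyset}\dim V_\lambda\,|\hat Q(\lambda)|^{2t},
\]
with $\dim V_\lambda=\sum_{\rho\in S_\lambda}\chi_\rho(1)^{2}$, analogous to the version used by Hough \cite{Hough}.

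To evaluate $\hat Q(\lambda)$ I would apply the Andr\'e product formula for super-character values at the elementary matrices $I\pm E(i,i+1)$ and $I\pm aE(i,i+1)$.  The ratio $\chi^\lambda(I+cE(i,i+1))/\chi^\lambda(1)$ depends only on how $\lambda$ meets position $(i,i+1)$ and takes three forms: it equals $1$ when no arc of $D$ has an endpoint in $\{i,i+1\}$; it equals $0$ when some long arc $(i',j')\in D$ has $i'=i$ or $j'=i+1$ (the ``hook-vanishing'' case); and it equals $e^{2\pi\mathrm{i}c\phi_i/p}$ when $(i,i+1)\in D$ with label $\phi_i$.  Writing $k(D)$ for the number of short arcs in $D$ and $m(D)$ for the number of hook-vanishing positions,
\[
\hat Q(\lambda)\;=\;1-\frac{k(D)+m(D)}{n-1}+\frac{1}{2(n-1)}\sum_{(i,i+1)\in D}\Bigl(\cos\tfrac{2\pi\phi_i}{p}+\cos\tfrac{2\pi a\phi_i}{p}\Bigr).
\]

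The crucial arithmetic input, where the choice $a\approx\sqrt p$ enters, is the lemma: for every $\phi\in\mathbb{F}_p^*$, $\max(\|\phi/p\|,\|a\phi/p\|)\ge c_0/\sqrt p$ (here $\|x\|$ is distance to the nearest integer).  Indeed if $|\phi|<\sqrt p/2$ then $|a\phi|<p/2$, so $a\phi\bmod p$ equals $a\phi$ with absolute value at least $a$; and a complementary argument, using that $a$ is odd and $a\not\equiv 1\pmod p$, prevents both $\cos(2\pi\phi/p)$ and $\cos(2\pi a\phi/p)$ from being simultaneously within $O(1/p^2)$ of $-1$.  Together these give $|\tfrac12(\cos(2\pi\phi/p)+\cos(2\pi a\phi/p))|\le 1-c_1/p$ for every nonzero $\phi$, so the triangle inequality applied to the decomposition of $\hat Q(\lambda)$ yields
\[
|\hat Q(\lambda)|\le 1-c_2\,\frac{m(D)+k(D)/p}{n},\qquad |\hat Q(\lambda)|^{2t}\le\exp\!\Bigl(-c_3 t\,\frac{m(D)+k(D)/p}{n}\Bigr).
\]

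To conclude I would sum over $\lambda$.  The dimension factor $\dim V_\lambda=|G|\chi^\lambda(1)/|K^\lambda|$ is controlled in terms of the arc structure, and the number of labeled set partitions with prescribed arc-type statistics is polynomial in $n$ and $p$.  Setting $t=\beta pn\log n$ with $\beta$ large enough, the short-arc decay $\exp(-c_3 tk/(pn))$ absorbs a combinatorial factor of $(n^{2}p)^{k}$, while the long-arc decay $\exp(-c_3 tm/n)$ absorbs the dimensional blow-up from long arcs (using $m(D)\ge 2\cdot(\text{\# long arcs in }D)$ and a careful bound on $\dim V_\lambda$).  Summing yields $\alpha e^{-c}$.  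The main obstacle will be the careful bookkeeping of the super-character formula -- precisely identifying the three cases at each position for arbitrary arc configurations in $D$ -- together with the combinatorial balance between the potentially large dimension factor from long arcs and the spectral decay from their hook-vanishing positions; this balance is tight and fixes the $pn\log n$ mixing-time scale.
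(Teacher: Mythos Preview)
Your framework matches the paper's: apply the super-character upper bound lemma of Arias-Castro--Diaconis--Stanley and evaluate $\widehat Q(D,\phi)$ via the Andr\'e--Yan character formula on the classes $C_i(\pm 1),C_i(\pm a)$. But your bounding strategy diverges, and as written it does not deliver the stated $pn\log n$.

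Two smaller points first. Your three-case formula for $\chi^{\lambda}(I+cE(i,i+1))/\chi^{\lambda}(1)$ omits the weight $p^{-|D_i|}$ in the nonvanishing cases, where $D_i$ is the set of boxes of $D$ strictly above and to the right of $(i,i+1)$; since these weights are $\le 1$, dropping them is harmless for upper bounds. More importantly, the multiplicity in the Arias-Castro--Diaconis--Stanley lemma is $p^{-i(D)}\le 1$, not $\dim V_\lambda=\sum_{\rho\in S_\lambda}d_\rho^{2}$; your formula $|G|\,\chi^{\lambda}(1)/|K^{\lambda}|$ for the latter is also wrong (it returns $|G|$ for the trivial super-character). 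You should simply quote their lemma and discard $p^{-i(D)}$, as the paper does.

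The genuine gap is in the short-arc contribution. Your pointwise estimate
\[
\Bigl|\tfrac12\bigl(\cos(2\pi\phi/p)+\cos(2\pi a\phi/p)\bigr)\Bigr|\le 1-\tfrac{c_1}{p}
\]
gives each short arc decay $\exp(-c_3 t/(pn))=n^{-c_3\beta}$ at $t=\beta pn\log n$. But each short arc also carries a label $\phi_i\in\mathbb{F}_p^{*}$, so the sum over $\phi$ contributes a factor $p-1$ per arc. Your claimed absorption of $(n^{2}p)^{k}$ by $n^{-c_3\beta k}$ therefore requires $n^{c_3\beta}>n^{2}p$, i.e.\ $\beta>(2+\log p/\log n)/c_3$, which is not a universal constant once $p\gg n^{C}$. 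Your argument thus yields only $t=O\!\bigl(pn\log(pn)\bigr)$, not $O(pn\log n)$.

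The paper avoids this by refusing to bound eigenvalues pointwise. It splits $\widehat Q(D,\phi)/p^{d(D)}$ into its positive and negative cosine parts $A^{\pm}(D,\phi)$ and then recognizes the sum over the labels $\phi_i$ of $(A^{\pm})^{2t}$ as precisely the $L^{2}$ distance to stationarity of the $\pm 1,\pm a$ product walk on $(\mathbb{Z}/p\mathbb{Z})^{N}$; by the paper's preliminary lemma on that walk, this sum is $O(1)$ after $O(pN\log N)$ steps, with the sum over all $p^{N}$ labels already absorbed. That reduction to a product chain, rather than a worst-case spectral-gap estimate, is exactly what replaces $\log p$ by $\log n$.
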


Section \ref{lit} gives details on the rich literature of this problem. Sections \ref{setup} and \ref{supchar} provide a quick overview of the super--character theory needed. In Section \ref{fourier}, we present a Fourier analysis argument, which leads to the proof of Theorem \ref{rootp}, contained in Section \ref{proofrootp}. Section \ref{comparison} provides a brief review of the comparison techniques introduced by Diaconis and Saloff-Coste \cite{Compar} and then uses them to prove Theorem \ref{main}.

\begin{remark}
The case of $p=2$ of the walk we consider has been thoroughly studied by Peres and Sly \cite{Peres}, who proved an upper bound for the mixing time of order $n^2$ and Stong \cite{Stong} who proved a lower bound also of order $n^2$.
\end{remark}
\section{Literature}\label{lit}
Many people have studied similar problems, starting with Zack \cite{Zack},  who was interested in the Heisenberg group (which is $G$ for the case $n=3$).
Diaconis and Saloff-Coste \cite{Laurent} used Nash inequalities to prove that  for the walk on $F$ for the case where $n$ is fixed and $p $ large, the mixing time is bounded above and below by a constant times $p^2$. See Diaconis and Hough \cite{Hough} for a broader review of the $n=3$ case and the extensions to nilpotent groups.

Stong \cite{Stong} found sharp bounds for the second and last eigenvalues of the $P-$walk which allowed him to prove an upper bound of order $p^2 n^3 \log p$. He also shows that at least $n^2$ steps are needed for the $P-$walk. Arias-Castro, Diaconis and Stanley \cite{StanleyD} then used super--character theory and comparison theory to give an upper bound of order $n^4 p^2 \log n + n^3 p^4 \log n$, taking into consideration Stong's earlier bounds on the eigenvalues \cite{Stong}. They prove it by doing \emph{super--character} theory analysis for the walk generated by $C_i(\pm 1)$ and, then, doing comparison theory \cite{Compar}. They also prove a lower bound of the form $p^2 n \log n$. 

Coppersmith and Pak  (\cite{Pak}, \cite{Coppersmith}) looked at the walk generated by $\{ g= I_n + a E(i,i+1), a \in \mathbb{Z}/p \mathbb{Z}\}$  and managed to improve the $n$ term of the upper bound in the case where $n>> p^2 $. They proved that $n^2 \log p$ steps are sufficient to reach stationarity. Peres and Sly \cite{Peres} proved that for $p=2$ the sharp bound is of order $n^2$  using the east model.  We refer to Peres and Sly for a more complete  survey of the existing literature. 

The present paper depends a lot on works by Andr\' e (\cite{Andre3}, \cite{Andre}, \cite{Andre2}), Carter and Yan \cite{Yan}. They have developed a theory using certain unions of conjugacy classes, that we will refer to as super-classes, and sums of irreducible characters, that are called \emph{super--characters}. Our work sharpens the super--character theory techniques introduced by Arias-Castro, Diaconis and Stanley and fixes the dependency
on $p$.

\section{Preliminaries}
We first consider the following random walk on $\mathbb{Z}/p\mathbb{Z}$:
$$K_x(y):= \begin{cases} 
\frac{1}{4}, & \mbox{ if } y=x \pm a ,  \\
\frac{1}{4}, & \mbox{ if } y=x \pm 1 ,
 \\ 0, & \mbox{otherwise.}
\end{cases}$$
This random walk, is a special case of $Q$ for the case $n=2$.
Theorem 6 of \cite{Pbook} says that the eigenvalues of the matrix $K$ are given by the Fourier transform of the irreducible representations of  $\mathbb{Z}/p\mathbb{Z}$ with respect to $K(g)=\begin{cases} 
\frac{1}{4}, & \mbox{ if } g= \pm a , \pm 1   \\ 0, & \mbox{otherwise} 
\end{cases}$.
In particular, if $\rho_x(j)= e^{\frac{2 \pi i x j}{p}}$ for $x, \j \in \mathbb{Z}/p\mathbb{Z}$, then $\hat{K}(\rho_x):=\sum_{y= \pm 1, \pm a} K(y)\rho(y) \frac{1}{2}\cos \frac{2 \pi  x a}{p}+ \frac{1}{2}\cos \frac{2 \pi  x }{p}$ are the eigenvalues of the transition matrix $(K_x(y))_{x,y \in \mathbb{Z}/p \mathbb{Z}}$. 
\begin{lemma}\label{cycle}[\cite{Threads}, Example 2.3]
\begin{enumerate}
\item[(a)] We have that 
$$\Vert K_{y}^t - U \Vert_2^2= \sum_{x =1}^{p-1} \left(\frac{1}{2}\cos \frac{2 \pi  x a}{p}+ \frac{1}{2}\cos \frac{2 \pi  x }{p}\right)^{2t},$$
where $y \in \mathbb{Z}/p\mathbb{Z}$ and $ U$ is the uniform measure of $\mathbb{Z}/p\mathbb{Z}$.
\item[(b)] If $t=c p$, then 
$$\Vert K_{y}^t - U \Vert_2^2\leq A e^{-c},$$
where $y \in \mathbb{Z}/p\mathbb{Z}$ and $A$ is uniform constant.
\end{enumerate}
\end{lemma}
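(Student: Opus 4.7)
For part (a), the transition matrix $K$ is a convolution operator on the abelian group $\mathbb{Z}/p\mathbb{Z}$, hence is simultaneously diagonalized by the characters $\rho_x(j)=e^{2\pi i xj/p}$, with eigenvalues equal to the Fourier transforms $\hat K(\rho_x)$ computed immediately before the lemma. Writing the signed measure $K^{*t}_y-U$ in the character basis---its Fourier coefficient at $\rho_x$ is $\hat K(\rho_x)^{t}\overline{\rho_x(y)}$ for $x\neq 0$, and vanishes at $x=0$ since both $K^{*t}_y$ and $U$ are probability measures---Parseval's identity on $\mathbb{Z}/p\mathbb{Z}$ yields (a) at once.

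For part (b), the plan is to first factor each eigenvalue via the product-to-sum identity
$$\tfrac{1}{2}\cos\tfrac{2\pi x}{p}+\tfrac{1}{2}\cos\tfrac{2\pi xa}{p}=\cos\!\left(\tfrac{\pi x(a+1)}{p}\right)\cos\!\left(\tfrac{\pi x(a-1)}{p}\right),$$
and then apply the elementary estimate $|\cos(\pi u)|\leq\exp(-C\,d(u,\mathbb{Z})^2)$ (universal $C>0$) to each factor. Each summand is thereby bounded by $\exp\!\left(-\tfrac{2Ct}{p^2}(\delta_+(x)^2+\delta_-(x)^2)\right)$, where $\delta_{\pm}(x)$ denotes the distance from $x(a\pm 1)$ to the nearest multiple of $p$.

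Substituting $t=cp$ turns the exponent into $-\tfrac{2Cc}{p}(\delta_+(x)^2+\delta_-(x)^2)$. The crucial input is that $a\approx\sqrt p$: a Diophantine / lattice-point argument---equivalent to counting points on the line of slope $a$ inside $(\mathbb{Z}/p)^2$ that fall in a box of side $\sqrt{\lambda p}$, or equivalently the three-distance theorem applied to the orbit $\{xa\bmod p\}$---shows that the level sets $\{x\in\{1,\ldots,p-1\} : \delta_+(x)^2+\delta_-(x)^2\leq\lambda p\}$ contain only $O(\lambda)$ elements. Summing the resulting geometric-type tail over $\lambda$ produces a bound of the form $A\,e^{-c}$ that is uniform in $p$.

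The main obstacle is precisely this Diophantine counting step. Any bound that decouples the two cosine factors---for instance the Jensen-type inequality $(\tfrac12\cos\alpha+\tfrac12\cos\beta)^{2t}\leq \tfrac12\cos^{2t}\alpha+\tfrac12\cos^{2t}\beta$---discards the joint structure of $x(a-1)\bmod p$ and $x(a+1)\bmod p$ and only recovers the classical $O(p^2)$ mixing time of the simple walk with generators $\pm 1$. The improvement from $p^2$ steps down to $p$ comes entirely from retaining both cosine factors together and exploiting the choice $a\approx\sqrt p$, which simultaneously constrains $x$ and $xa$ modulo $p$.
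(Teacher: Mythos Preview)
Your proposal is correct and aligns with the paper's approach, which simply defers part (a) to the standard Plancherel identity for convolution walks (Lemma~12.18 of \cite{Peresbook}) and part (b) to Example~2.3 of \cite{Threads}; you have in effect written out what those citations contain.

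One minor remark on route: in part (b) you reach the Diophantine core through the product-to-sum factorization, reducing to joint control of $|x(a-1)|_p$ and $|x(a+1)|_p$. The argument in \cite{Threads} typically works instead with the pair $(|x|_p,|xa|_p)$ directly, bounding the eigenvalue via $1-\frac12\cos\alpha-\frac12\cos\beta\ge c\big(\|x/p\|^2+\|xa/p\|^2\big)$ and then counting lattice points of $\{(x,xa+kp)\}$ in small boxes. The two formulations are equivalent up to the invertible change of variables $(x(a-1),x(a+1))\leftrightarrow(2x,2xa)$, so your lattice-counting step is the same one, just in rotated coordinates. Your closing observation---that decoupling the two cosines via Jensen loses exactly the joint Diophantine structure and degrades the bound to $O(p^2)$---is precisely the point of the cited example.
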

\begin{proof}
Part (a) follows from Lemma 12.18 of \cite{Peresbook} and the fact that $\frac{1}{2}\cos \frac{2 \pi  x a}{p}+ \frac{1}{2}\cos \frac{2 \pi  x }{p}$ are the eigenvalues of $K$. Part (b) follows by the analysis presented in example 2.3 of \cite{Threads}.
\end{proof}

The following lemma is a key computation for the proof of Theorem \ref{rootp}.
\begin{lemma}
We have that there are $\alpha, \beta, $ uniform constants in $p,n$, such that if $t=  \beta p N \log N$
\begin{equation}
\sum_{(x_1,\ldots, x_n) \in (\mathbb{Z}/p\mathbb{Z})^N \setminus \{\textbf{0}\}}  \left( \frac{1}{2N} \sum^{N}_{j=1} \left( \cos \left(\frac{2 \pi x_ja }{p} \right)  +  \cos \left(\frac{2 \pi x_j }{p}\right) \right)  \right)^{2t} \leq \alpha e^{-\beta}.
\end{equation}
\end{lemma}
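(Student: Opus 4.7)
I would first recognize the sum as a chi-square distance for a natural product walk on $(\mathbb{Z}/p\mathbb{Z})^N$. Let $f(\xi) = \tfrac{1}{2}\cos(2\pi\xi a/p) + \tfrac{1}{2}\cos(2\pi\xi/p)$ denote the eigenvalues of the cycle walk $K$ of Lemma~\ref{cycle}, and let $P$ be the coordinate-at-a-time walk on $(\mathbb{Z}/p\mathbb{Z})^N$ that at each step picks $(j,\epsilon)\in [N]\times\{\pm 1,\pm a\}$ uniformly and adds $\epsilon$ to coordinate $j$. Its Fourier eigenvalues are precisely the quantities $\tfrac{1}{N}\sum_j f(x_j)$ indexed by $(x_1,\ldots,x_N)$, so Plancherel on $(\mathbb{Z}/p\mathbb{Z})^N$ yields
\begin{equation*}
\sum_{(x_1,\ldots,x_N)\neq \vec 0}\Big(\tfrac{1}{2N}\sum_{j=1}^{N}\bigl(\cos(2\pi x_j a/p)+\cos(2\pi x_j/p)\bigr)\Big)^{2t} = p^N P^{*2t}(\vec 0,\vec 0) - 1.
\end{equation*}
The lemma thus reduces to a chi-square mixing estimate for $P$ at time $t=\beta p N \log N$.

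The workhorse is the standard coupling for coordinate-at-a-time product walks. Over $2t$ steps of $P$, coordinate $j$ is selected $M_j$ times with $(M_1,\ldots,M_N)\sim\mathrm{Multinomial}(2t;1/N,\ldots,1/N)$, and conditional on $\vec M$ the coordinates evolve as independent $K$-walks of lengths $M_j$. Writing $K^{*m}(0,0)=(1+\Delta_m)/p$ for $\Delta_m:=\sum_{\xi\neq 0}f(\xi)^m$, this yields the key identity
\begin{equation*}
p^N P^{*2t}(\vec 0,\vec 0) - 1 = \mathbb{E}_{\vec M}\Big[\prod_{j=1}^N(1+\Delta_{M_j})\Big] - 1 = \sum_{\emptyset\neq S\subseteq[N]} \mathbb{E}\Big[\prod_{j\in S}\Delta_{M_j}\Big].
\end{equation*}
Lemma~\ref{cycle}(b) controls $\Delta_m$: for even $m$, $\Delta_m = \|K^{m/2}-U\|_2^2 \leq A e^{-m/(2p)}$; for odd $m$, the bound $|f|^m \leq |f|^{m-1}$ (since $|f|\leq 1$) gives $|\Delta_m| \leq \Delta_{m-1} \leq A e^{-(m-1)/(2p)}$, so that uniformly $|\Delta_m| \leq A' e^{-m/(2p)}$ with $A'=Ae$.

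Substituting this uniform bound and using that $\sum_{j\in S} M_j \sim \mathrm{Binomial}(2t, |S|/N)$,
\begin{equation*}
\mathbb{E}\Big[\prod_{j\in S}|\Delta_{M_j}|\Big] \leq (A')^{|S|}\,\mathbb{E}\bigl[e^{-\sum_{j\in S}M_j/(2p)}\bigr] = (A')^{|S|}\bigl(1-\tfrac{|S|}{N}(1-e^{-1/(2p)})\bigr)^{2t} \leq \bigl(A' N^{-\beta/2}\bigr)^{|S|},
\end{equation*}
where the last step uses $1-e^{-1/(2p)}\geq 1/(4p)$ and $(1-x)^{2t}\le e^{-2tx}$. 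Summing over subsets $S$ then gives the final bound
\begin{equation*}
\Big|\sum_{\emptyset\neq S\subseteq[N]}\mathbb{E}\Big[\prod_{j\in S}\Delta_{M_j}\Big]\Big|\leq \sum_{s=1}^{N}\binom{N}{s}(A' N^{-\beta/2})^s \leq \exp(A' N^{1-\beta/2})-1,
\end{equation*}
which is arbitrarily small once $\beta$ is a sufficiently large universal constant.

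The main obstacle is precisely this last quantitative step. One needs (i) the improved bound $|\Delta_m|\leq A' e^{-m/(2p)}$ without the extra $\sqrt p$ factor that a naive Cauchy--Schwarz would introduce for odd $m$, and (ii) a Chernoff bound on the Binomial marginals of $\vec M$ to control the negligible event that some $M_j$ falls below the universal Lemma~\ref{cycle}(b) threshold $m_0=O(p)$; on that event one uses the trivial $|\Delta_{M_j}|\leq p-1$ together with the negative association of the multinomial indicators $\mathbb{1}[M_j<m_0]$ to check that the residual contribution remains smaller than the main term.
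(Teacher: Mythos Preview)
Your argument is correct and shares the paper's starting point: both recognize the left-hand side as the squared $L^2$ distance $\|q^{*t}-\pi\|_2^2$ for the coordinate-at-a-time walk $q$ on $(\mathbb{Z}/p\mathbb{Z})^N$, whose Fourier eigenvalues are exactly $\tfrac{1}{N}\sum_j f(x_j)$. From there, however, the paper proceeds in a single sentence by invoking Theorem~1 of Section~5 of Diaconis--Saloff-Coste \cite{Compar}, which is a black-box result lifting the single-coordinate $L^2$ bound of Lemma~\ref{cycle}(b) to the $N$-fold product and producing the $pN\log N$ threshold directly. You instead re-derive that product-chain estimate by hand via the multinomial coupling, expanding $p^N q^{*2t}(\vec 0,\vec 0)-1=\sum_{\emptyset\ne S}\mathbb{E}\bigl[\prod_{j\in S}\Delta_{M_j}\bigr]$ and bounding each term with the Binomial moment generating function. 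This buys self-containment and makes the $N\log N$ mechanism explicit, at the price of having to deal with the event $\{\min_j M_j<m_0\}$ on which the uniform bound $|\Delta_m|\le A'e^{-m/(2p)}$ genuinely fails (it cannot hold for small $m$, since already $\Delta_0=p-1$); your Chernoff-plus-negative-association patch is the standard remedy and is sound. In short, you have essentially re-proved the Diaconis--Saloff-Coste theorem that the paper cites.
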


\begin{proof}
Consider the following random walk on $(\mathbb{Z}/p\mathbb{Z})^N$:
$$q(x,y)= \begin{cases} 
\frac{1}{4N}, & \mbox{ if } y= x\pm a e_i ,x\pm  e_i \mbox{ for } i=1,\ldots,N   \\ 0, & \mbox{otherwise,} 
\end{cases}$$
where $e_i$ is the vector in $(\mathbb{Z}/p\mathbb{Z})^N$ that has a one in the $i$th position and everywhere else zero.

Theorem 6 of chapter 3 of \cite{Pbook} says that the eigenvalues of $q$ are indexed by $(x_1, \ldots, x_{N}) \in (\mathbb{Z}/p\mathbb{Z})^N $ and they are equal to
\begin{equation}
   \frac{1}{2N} \sum^{N}_{j=1} \left( \cos \left(\frac{2 \pi x_ja }{p} \right)  +  \cos \left(\frac{2 \pi x_j }{p}\right) \right)
\end{equation}
Lemma \ref{cycle} and Theorem 1 of Section 5 of  Diaconis and Saloff-Coste \cite{Compar} says that 
 $t=  \beta p N \log N$
\begin{equation*}
\sum_{(x_1,\ldots, x_N) \in (\mathbb{Z}/p\mathbb{Z})^N \setminus \{\textbf{0}\}}  \left( \frac{1}{2N} \sum^{N}_{j=1} \left( \cos \left(\frac{2 \pi x_ja }{p} \right)  +  \cos \left(\frac{2 \pi x_j }{p}\right) \right) \right)^{2t} = \Vert q_{y}- \pi \Vert^2_2 \leq \alpha e^{-\beta},
\end{equation*}
where $\pi $ is the uniform measure on $(\mathbb{Z}/p\mathbb{Z})^N $. 

\end{proof}

\section{The conjugacy classes and the super--classes}\label{setup}
While a description of general conjugacy classes (and characters in $G$) remains unknown \cite{GKPRT}, as explained in \cite{StanleyD}, there is  a full description of the conjugacy class of $I_n + x E(i,i+1)$, where $x \neq 0$. It consists of all matrices in $G$ whose $(i,i+1)$ entry is $x$, the entries of the $i+1$ column exactly above $(i,i+1)$ are arbitrary elements $a_1,a_2,...,a_{i-1}$, the entries of the $i^{th}$ row exactly to the right of $(i,i+1)$ are arbitrary elements $b_1,...,b_{n-i-1}$ and  in the block surrounded by these $a_j, b_k$ the $(j,k)$ entry is $x^{-1} a_jb_k$.
Here is an example for $n=6$:
$$\left[ \begin{matrix}
&1 & 0 & 0 & a_1 & a_1 b_1x^{-1} & a_1 b_2 x^{-1}\\
&0 & 1  &0&  a_2 & a_2 b_1 x^{-1} & a_2 b_2 x^{-1}\\
&0 & 0 & 1 & x & b_1 & b_2\\
&0 & 0 & 0 & 1 & 0 & 0\\
&0 & 0 & 0 &  0 & 1 & 0 \\
& 0 &0 & 0 & 0 &  0 & 1 
\end{matrix}  \right]$$
Despite the fact that the character theory of $G$ is unknown, Andr\' e, Carter and Yan have provided a formula for the \emph{super--characters} of $G$ (certain specific sums of irreducible characters). Arias-Castro, Diaconis and Stanley \cite{StanleyD} showed how to use \emph{super--characters} to prove upper bounds for mixing times. This will be explained in Section \ref{fourier}, after establishing the necessary notation and terminology.

Here is the general description of the \emph{super--classes} of $G$ (certain unions of conjugacy classes). 
Let $U_n(p)$ be the set of all $n \times n$ upper triangular  matrices with zeros on the diagonal and let $G \times G$ act on $U_n(p)$ by right and left multiplication. Let $\Psi$ denote the set of orbits of this action, which we refer to as \emph{transition} orbits. According to Yan, each \emph{transition} orbit contains a unique representative which has at most one non-zero entry per column and per row (see Theorem $3.1$ of \cite{Yan2}). Thus $\Psi$ consists of pairs $(D, \phi)$, where $D$ is a collection of positions $(i,j)$ with $i<j$ at most one per column and per row and $\phi : D \rightarrow F^*_p$ any map.

A \emph{super--class} in $G$ corresponds to a \emph{transition} orbit and consists of those elements of $G$ of the form $I_n$ plus an element of the \emph{transition} orbit. Yan explains at the end of Section $2$ of \cite{Yan2} that the \emph{super--class} of an element of the form $I_n + x E(i,i+1)$ in facts coincides with its conjugacy class. For $(D, \phi)\in \Psi$, denote the corresponding \emph{super-class} by $C(D, \phi)$.

\section{The super--characters}\label{supchar}
The \emph{super--characters} are certain sums of characters that can be used to bound the mixing time of the walk generated by $Q$, as it is later described in Lemma \ref{upper bound lemma}. Here is the description of the \emph{super--characters} as provided by Yan \cite{Yan}.

Let $U^*_n(F_p)$ be the space of linear maps from $U_n(p)$ to $F_p$. Then $G$ acts on the left and right of $U^*_n(p)$ as follows:
$$(g \star \lambda)(m)= \lambda(mg), \quad (\lambda \star g) (m)= \lambda (gm),$$
where $g \in G, \lambda \in U^*_n(F_p)$ and $m \in U_n(p) $. The orbits of the action of $G \times G$ on $U^*_n(F_p)$ are called \emph{cotransition}  orbits.

The left action gives the regular representation of $G$ on $\mathbb{C}[G]$. To get an element of the group algebra $\mathbb{C}[G]$, we consider a non-trivial homomorphism $\theta:F_p \rightarrow \mathbb{C}^* $ from the additive group $F_p$ to the non-zero complex numbers. Then, for $\lambda \in U^*_n(F_p)$ we get the element of the group algebra $u_{\lambda} : G \rightarrow \mathbb{C}$ defined as:
 $$u_{\lambda}(g)= \theta(\lambda(g-I))$$
The goal is to decompose regular representation of $G$ on $\mathbb{C}[G]$ into a sum of orthogonal submodules of $\mathbb{C}[G]$ (not necessarily irreducible). 
Proposition 2.1 of \cite{Yan} says that
$$g \cdot u_{\lambda}= u_{\lambda}(g)  u_{g \cdot \lambda}$$
Therefore, if $L$ is an orbit of the left action of G on $U^*_n(F_p),$ then $span \lbrace  u_{\lambda}\rbrace_{ \lambda \in L }$ is a submobule of $\mathbb{C}[G]$. Corollary 2.3 of  \cite{Yan} says that the character $\chi_{\lambda}$ only depends on the \emph{contransition} orbit to which $\lambda$ belongs to. Theorem 3.2 of \cite{Yan} adds that the \emph{cotransition} orbits are indexed by pairs $(D, \phi)$ where $D$ denote the positions of the non-zero entries and $\phi : D \rightarrow F^*_p$ is the map that assigns a non-zero entry to each $(i,j)$ of $D$.
  
Let $\Psi^*$ denote the set of orbits of the action of G on $U^*_n(F_p)$ and $\chi_{D, \phi}$ be the character corresponding to the above representation, where $D$ and $ \phi$ determine the conjugacy class 
 we described in Section  \ref{setup}. Proposition 2.2 of \cite{Yan} proved that if $\lambda$ and 
 $\lambda'$ are in the same right orbit of $G$ acting on $U^{*}_n(F_p)$ then 
 $\chi_{\lambda}=\chi_{\lambda'}$ and therefore it makes sense to talk about  $\chi_{D, \phi}$. Also, Corollary 2.8 of \cite{Yan} says that  $\lbrace \chi_{\lambda} \rbrace_{ \lambda \in \Psi^*}$ are orthogonal characters.
 
 
\section{Fourier transform setup}\label{fourier}
For $Q$ a probability measure on $G$ which is also a \emph{super--class} function, which means that $Q$ is constant on the \emph{super--classes}, and $\chi_{D, \phi}$, let 
\begin{equation}\label{f}
\widehat{Q}(D, \phi)= \sum_{g \in G} \chi_{D,\phi}(g) Q(g),
\end{equation}
denote the Fourier Transform of  $Q$ at $\chi_{D,\phi}$. Then, E.\ Arias-Castro, P.\ Diaconis and R.\ Stanley \cite{StanleyD} proved the following upper bound lemma using Fourier Transform arguments:
 
 \begin{lemma}\label{upper bound lemma}[\cite{StanleyD}, Proposition 2.4]
We have that
 \begin{equation}\label{l2}
 4||Q^{*t}- \mu||^2_{T.V.} \leq \sum_{D \neq \emptyset, \phi} p^{-i(D)} \left( \frac{\widehat{Q}(D, \phi)}{p^{d(D)}} \right)^{2t} ,
 \end{equation}
 where $d(D)$ is the sum of the vertical distances from the boxes in $D$ to the super diagonal $\lbrace (i,i+1)\rbrace_{1 \leq i \leq n-1}$ and  $i(D)$ counts the number of pairs of boxes $(i,j), (k,l)$ in $D$ with $1\leq i<k<j<l \leq n$ so that the corner $(k,j)$ is strictly above the diagonal.
 \end{lemma}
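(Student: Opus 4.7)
I would prove this as the super-character analogue of the classical Diaconis--Shahshahani upper bound lemma, with the orthogonal family of super-characters $\{\chi_{D,\phi}\}$ from Section~\ref{supchar} playing the role of the irreducible characters. The argument has three formal analytic steps plus one combinatorial bookkeeping step in which the exponents $d(D)$ and $i(D)$ appear.

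\textbf{Analytic steps.} First, Cauchy--Schwarz yields
\[
4\|Q^{*t}-\mu\|^{2}_{T.V.}=\Bigl(\sum_{g\in G}|Q^{*t}(g)-\mu(g)|\Bigr)^2\le |G|\sum_{g\in G}\bigl(Q^{*t}(g)-\mu(g)\bigr)^{2}.
\]
Second, since $Q$ is constant on super-classes so are $Q^{*t}$ and $Q^{*t}-\mu$. By Corollary~2.8 of \cite{Yan} the non-trivial super-characters $\{\chi_{D,\phi}\}_{D\neq\emptyset}$ are mutually orthogonal and span the mean-zero super-class functions on $G$; expanding $Q^{*t}-\mu=\sum_{D\neq\emptyset,\phi}c^{(t)}_{D,\phi}\chi_{D,\phi}$ and using Parseval converts the $L^{2}$ sum into $\sum_{D\neq\emptyset,\phi}|c^{(t)}_{D,\phi}|^{2}\|\chi_{D,\phi}\|_{2}^{2}$. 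Third, every irreducible constituent of $\chi_{D,\phi}$ lies in a single cotransition orbit and therefore has a common dimension and, because $Q$ is super-central, a common convolution eigenvalue equal to $\widehat{Q}(D,\phi)/\chi_{D,\phi}(1)$ (Schur's lemma, applied uniformly inside the cotransition orbit). Iterating this Schur step and inverting the expansion of $Q$ itself gives $c^{(t)}_{D,\phi}=\widehat{Q}(D,\phi)^{t}\,\chi_{D,\phi}(1)^{1-t}/\|\chi_{D,\phi}\|_{2}^{2}$, and substituting produces the abstract bound
\[
4\|Q^{*t}-\mu\|^{2}_{T.V.}\le \sum_{D\neq\emptyset,\phi}\frac{|G|\,\chi_{D,\phi}(1)^{2}}{\|\chi_{D,\phi}\|_{2}^{2}}\Bigl(\frac{\widehat{Q}(D,\phi)}{\chi_{D,\phi}(1)}\Bigr)^{2t}.
\]

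\textbf{Combinatorial identification.} It remains to convert the abstract constants into the explicit exponents in the statement. Andre's and Yan's parametrization of cotransition orbits on $U^{*}_{n}(\mathbb{F}_{p})$ gives $\chi_{D,\phi}(1)=p^{d(D)}$, because the corresponding isotypic block is built from a $G$--orbit whose size is $p$ raised to the sum of vertical distances from the boxes of $D$ up to the super-diagonal. For the prefactor, one computes $\|\chi_{D,\phi}\|_{2}^{2}/|G|$ as the number of irreducible constituents packaged inside $\chi_{D,\phi}$ (weighted by dimensions), reads this off from Yan's Theorem~3.2, and compares with $\chi_{D,\phi}(1)^{2}$; the statistic $i(D)$ enters through exactly those crossings $(i,j),(k,l)\in D$ with $i<k<j<l$ that force the constrained cell $(k,j)$ carrying $x^{-1}a_{j}b_{k}$, as in the explicit super-class of Section~\ref{setup}, and the resulting prefactor collapses to $p^{-i(D)}$.

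\textbf{Main obstacle.} Steps one through three are formal and parallel the usual proof of the Diaconis--Shahshahani upper bound lemma. The real work is the combinatorial identification: verifying that the ratio $\|\chi_{D,\phi}\|_{2}^{2}/\chi_{D,\phi}(1)^{2}$, divided out by $|G|$, is precisely $p^{i(D)}$ requires carefully unpacking Yan's Theorem~3.2, distinguishing transition from cotransition orbits, and double-counting irreducible constituents against orbit representatives. The geometric origin of $i(D)$ is that each "crossing pair" in its definition forces an extra entry of the form $x^{-1}a_{j}b_{k}$ in the super-class representative, and it is this forcing that ties $i(D)$ to the gap between $\|\chi_{D,\phi}\|_2^2$ and its naive value; pinning down this relationship is the delicate part.
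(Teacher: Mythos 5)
The paper never proves this lemma: it is imported verbatim as Proposition 2.4 of \cite{StanleyD}, so the only comparison available is with that argument, whose skeleton your three analytic steps do reproduce correctly (Cauchy--Schwarz down to the $L^2$ norm, expansion of the superclass function $Q^{*t}-\mu$ in the orthogonal supercharacters, and the fact that convolution by a superclass measure acts by a single scalar $\widehat{Q}(D,\phi)/\chi_{D,\phi}(1)$ on the whole block of irreducible constituents of $\chi_{D,\phi}$). Two caveats on that part: the ``Schur's lemma applied uniformly inside the cotransition orbit'' step is not a proof --- constancy of $\sum_g Q(g)\chi_\rho(g)/\chi_\rho(1)$ over the constituents of one supercharacter holds because the superclass sums span a subalgebra of the center of $\mathbb{C}[G]$ whose primitive idempotents are the block idempotents (or by direct computation from Yan's character formula), and that is what has to be cited or checked.

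The genuine gap is the combinatorial identification, and it cannot be repaired within your framework. Your abstract bound has per-block prefactor $|G|\chi_{D,\phi}(1)^2/\Vert\chi_{D,\phi}\Vert_2^2=\chi_{D,\phi}(1)^2/\langle\chi_{D,\phi},\chi_{D,\phi}\rangle$. Yan's theory gives $\chi_{D,\phi}(1)=|G\lambda|=p^{d(D)}$ and $\langle\chi_{D,\phi},\chi_{D,\phi}\rangle=|G\lambda|^2/|G\lambda G|=p^{2d(D)}/p^{2d(D)-i(D)}=p^{i(D)}$ (the statistic $i(D)$ measures the overlap of the left and right actions on $U_n^*$, i.e.\ the size of the two-sided orbit, not the constrained cells of the superclass representative you point to). So what your argument actually yields is the prefactor $p^{2d(D)-i(D)}$, not $p^{-i(D)}$; your claim that it ``collapses to $p^{-i(D)}$'' would force $\langle\chi_{D,\phi},\chi_{D,\phi}\rangle=p^{2d(D)+i(D)}$, which is impossible since $\langle\chi,\chi\rangle=\sum_\rho m_\rho^2\le\chi(1)^2$; indeed any bound of this Cauchy--Schwarz/Plancherel shape has per-block prefactor at least $1$. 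Worse, after the initial Cauchy--Schwarz every step is an identity, so $\sum_{D\neq\emptyset,\phi}p^{2d(D)-i(D)}\bigl(\widehat{Q}(D,\phi)/p^{d(D)}\bigr)^{2t}$ equals $|G|\,\Vert Q^{*t}-\mu\Vert_2^2$ exactly, and no argument routed through the $L^2$ norm can produce the strictly smaller right-hand side printed in the statement. In short, your proof establishes the upper bound lemma in the form with $p^{2d(D)-i(D)}$ (the form the Plancherel identity forces, and the one the comparison in \cite{StanleyD} rests on), and the step asserting the collapse to $p^{-i(D)}$ is unjustified and false as an identification of $\Vert\chi_{D,\phi}\Vert_2^2$.
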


\begin{remark}
The statistics $i(D)$ and $d(D)$ are discussed in full detail in Yan \cite{Yan}, Arias-Castro, Diaconis and Stanley \cite{StanleyD}.
\end{remark}

\section{Proof of Theorem \ref{rootp}}\label{proofrootp}
The proof of Theorem \ref{rootp} follows the proof of Theorem $1.1$ of \cite{StanleyD} and makes use of Lemma \ref{upper bound lemma}:
\begin{proof}
To bound the right hand side of \eqref{l2}, we consider each summand $\chi_{D,\phi}(g)$ of \eqref{f}. For any $D$ let $D_i$ be  the set of positions in D in the rectangle strictly above and to the right of $(i,i+1)$ and $R(\{i,i+1\})$ is the complement of all the positions that are directly below $(i,i+1 )$ or directly to the right of $(i,i+1)$. Then the formula of Remark 2 of Section 2.3 of Arias-Castro, Diaconis and Stanley says that
\begin{equation}\label{1}
\frac{\chi_{D,\phi}(g)}{p^{d(D)}}= \begin{cases} p^{-|D_i| }\theta(\pm \phi(i,i+1)), & \mbox{if } D \subset R(\{i,i+1\}) \\ 0, &  \mbox{ otherwise,} \end{cases},
\end{equation}
if $g \in C_i(\pm 1)$ and 
\begin{equation}\label{2}
\frac{\chi_{D,\phi}(g)}{p^{d(D)}}= \begin{cases} p^{-|D_i|} \theta(\pm a \phi(i,i+1)), & \mbox{if } D \subset R(\{i,i+1\}) \\ 0, & \mbox{ otherwise,} \end{cases}
\end{equation}
if $g \in C_i(\pm a)$. The right hand side of \eqref{l2} can be bounded as follows 
$$\sum_{D\neq \emptyset, \phi} p^{-i(D)} \left(\frac{\widehat{Q}(D, \phi)}{p^{d(D)}}\right)^{2t}  \leq \sum_{D\neq \emptyset, \phi} \left(\frac{\widehat{Q}(D, \phi)}{p^{d(D)}}\right)^{2t}.$$
Equations \eqref{f}, \eqref{1} and \eqref{2} give that 
$$\frac{\widehat{Q}(D, \phi)}{p^{d(D)}} = \frac{1}{2n-2} \sum^{2n-2}_{i=1} w_i(D) \left(\cos \frac{2 \pi \phi(i,i+1)}{p} + \cos \frac{2 \pi \phi(i,i+1)a}{p}\right)$$
where the weights $w_i (D)$ satisfy $0 \leq w_i(D) \leq 1$ and $w_i(D)=0$ whenever there is $s$ such that $(i,s) \in D$ or $(s,i+1) \in D$.
Let  $Z(D)$ be the set of $i=1,2,...,n-1$ such that $w_i(D)=0$. 

For $x \in \mathbb{Z}/ p \mathbb{Z}$, let $I_x^{+}(\phi)$ (respectively $I_{x}^{-}(\phi)$)  be the set of   
$i=1,2,3,...,n-1$ such that $\cos \frac{2 \pi \phi(i,i+1)x}{p}>0$ (resp. 
$\cos \frac{2 \pi \phi(i,i+1)x}{p}<0$). 
The following will be the dominating terms:

\begin{equation*}
A^{+}(D,\phi)=\frac{1}{2n-2} \left( \sum_{\substack{ i \in \\I_1^{+}(\phi)\cap Z(D)^c}} \cos \frac{2 \pi \phi(i,i+1)}{p} + \sum_{\substack{ i \in \\I_{a}^{+}(\phi)\cap Z(D)^c}} \cos \frac{2 \pi \phi(i,i+1)a}{p}  \right)
\end{equation*}
and 
\begin{equation*}
 A^{-}(D,\phi)=\frac{1}{2n-2} \left( \sum_{\substack{ i \in \\I_1^{-}(\phi)\cap Z(D)^c}} \cos \frac{2 \pi \phi(i,i+1)}{p} + \sum_{\substack{ i \in \\I_{a}^{-}(\phi)\cap Z(D)^c}} \cos \frac{2 \pi \phi(i,i+1)a}{p}  \right)
 \end{equation*}
Since 
$$A^{-}(D,\phi) \leq \frac{\widehat{Q}(D, \phi)}{p^{d(D)}}  \leq A^{+}(D,\phi),$$
$$\sum_{D\neq \emptyset, \phi} \left(\frac{\widehat{Q}(D, \phi)}{p^{d(D)}}\right)^{2t} \leq S^{+} + S^{-}$$
where $$ S^{\pm} =\sum_{D\neq \emptyset, \phi} \left( A^{\pm}(D,\phi) \right)^{2t}  $$
To bound $S^{+}$:

Let $b(D)$ be the cardinality of the elements of D that are off the super diagonal and $c^{\pm}(D)=|I_{a}^{\pm}(\phi) \cap on(D)|+ | I_{1}^{\pm}(\phi)\cap on(D)|$ where $on(D)$ are the elements of the super diagonal of D.

Then, replacing $\phi(i,i+1)$ by $h_i$,
\begin{align*}
&S^{+} \leq 
 \sum_D p^{b(D)} p^{ c^{-}(D)} \left( \frac{c^+(D)}{2n-2} \right)^{2t}\\
& \sum_{h_1,h_2,...,h_{c^+(D)}}  \left( \frac{1}{c^+(D)} \left( \sum_{\substack{ i \in \\I_p^{+}(\phi)\cap Z(D)^c}} \cos \frac{2 \pi h_i}{p} + \sum_{\substack{ i \in \\I_{a}^{+}(\phi)\cap Z(D)^c}} \cos \frac{2 \pi h_ia}{p}  \right) \right)^{2t} 
\end{align*}
Lemma \ref{cycle} says that after $t=cp n \log n $ steps, there are uniform constants $\alpha$ and $\beta$ such that
\begin{equation}\label{first}
\sum_{h_1,h_2,...,h_{c^+(D)}}  \left( \frac{1}{c^+(D)} \left( \sum_{\substack{ i \in \\I_p^{+}(\phi)\cap Z(D)^c}} \cos \frac{2 \pi h_i}{p} + \sum_{\substack{ i \in \\I_{a}^{+}(\phi)\cap Z(D)^c}} \cos \frac{2 \pi h_ia}{p}  \right) \right)^{2t} \leq \alpha e^{- \beta c}.
\end{equation}

We, also, need to bound the term $T=\sum_D p^{b(D)} p^{ c^{-}(D)} \left( \frac{c^+(D)}{2n-2} \right)^{2t}$. Following the second half of the proof of Theorem 1.1 of \cite{StanleyD}, for $t=2n (p+2) \log n +dn$
$$ T\leq   1+ 2 e^{-d}$$
where $d>0$.
To prove this, notice that if $a(D) $ is the cardinality of $Z(D)$ then $a(D)+ c^+(D) + c^-(D) \leq 2n-2$ and $a(D) > b(D)$ so
$$T \leq \sum_D p^{b(D)+ c^-(D)} \left( 1-\frac{b(D)+c^-(D)}{2n-2} \right)^{2t} $$
since there are at most ${n-1 \choose c} \times {n^2 \choose c}$ sets of positions with $b=b(D)$ and $c=c^-(D)$, which is bounded by $n^{2(b+c)}$ 
$$T \leq 1 + \sum_{1\leq b+c \leq n-1} (np)^{2(b+ c)} \left( 1-\frac{b+c}{2n-2} \right)^{2t} \leq$$
$$1+ \sum^n_{l=1} (pn)^{2l} e^{-tl/ (n-1)}$$
For $t>2n p \log n +dn$, we have that 
$$T\leq 1+ e^{-d}\sum^{n-1}_{l=1} \left( \frac{p}{n^p}\right)^{2l} \leq 1+ \frac{e^{-d}}{1-\frac{p}{n^{p+1}}} \leq 1+ 2e^{-d} $$
since $n^{p+1} \geq 2 p$ for $n>2$.

Therefore, overall there are new $\alpha, \beta, $ uniformly in $p,n$ such that for $c>0$ such that if $t= c \beta p n \log n$
$$
 \sum_D p^{b(D)} p^{ c^{-}(D)} \left( \frac{c^+(D)}{2n-2} \right)^{2t}
\sum_{h'_1,h'_2,...,h'_{c^+(D)}}  \left( \frac{1}{c^+(D)}  \sum^{c^+(D)}_{ i=1} \cos \frac{2 \pi h'_ia}{p}   \right)^{2t}  \leq \alpha e^{-c}
$$

 Similar arguments can be used to bound $S^-$.

\end{proof}

\section{The Comparison Argument}\label{comparison}
A comparison argument allows to use theorem \ref{rootp} in order to prove theorem \ref{main}. The $L^2$ distance of $P^{*t}$ from $\mu$ is defined as
$$||P^{*t}- \mu||_2:= \left( \sum_{g \in G} |P^{*t}(g)- \mu(g)|^2\right)^{1/2} .$$
The Cauchy-Schwartz inequality gives that
 $$4||Q^{*t}- \mu||^2_{T.V.} \leq |G|||P^{*t}- \mu||^2_2.$$
A direct application of Lemma 8 of \cite{Compar} gives the following tool.
\begin{lemma}\label{comp}
Let $P,Q$ be the probability measures on $G$, that were defined in the introduction. Let $ g \in C_i(\pm 1) \cup C_i(\pm a) $. Fix a way of writing $g$ as a product of the generators $\{ I_n \pm E(i,i+1)\}$ of odd length. Let $|g|$ be the length of this word and if $z \in \{ I_n \pm E(i,i+1)\}$ and let $N(g,z)$ be the number of times $z$ appears in this word. Then
\begin{equation*}
|G|||P^{*t}- \mu||^2_2 \leq |G| (e^{-t/A}+ ||Q^{*t/2A}- \mu||^2_2 )
\end{equation*}
where $$A=\max_{\substack{z \in \\  \{I_n \pm E(i,i+1) \} } } \frac{1}{P(z)} \sum_{\substack{g  \in  \\ C_i(\pm 1) \cup C_i(\pm a) }} |g|N(g,z) Q(g).$$ 
\end{lemma}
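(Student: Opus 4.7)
The plan is to invoke Lemma 8 of Diaconis--Saloff-Coste \cite{Compar} essentially as a black box; the content of the proof is verifying its hypotheses and identifying the comparison constant. That lemma packages a Dirichlet-form comparison $\mathcal{E}_Q \leq A \, \mathcal{E}_P$ into the displayed $L^2$ inequality, with the $e^{-t/A}$ term absorbing the contribution of eigenvalues near $1$ (via $(1-x)^t \leq e^{-tx}$) and the $\Vert Q^{*t/2A} - \mu \Vert_2^2$ term handling the remaining spectral mass after rescaling time by $1/(2A)$.

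First I would check the hypotheses of that lemma. Both $P$ and $Q$ are symmetric probability measures: the generating set $\{I_n \pm E(i,i+1)\}$ is closed under inversion, and the support of $Q$ is closed under inversion as well since $C_i(-1)$ and $C_i(-a)$ are the conjugacy classes of the inverses of $I_n + E(i,i+1)$ and $I_n + aE(i,i+1)$ respectively. Hence both walks are reversible with respect to the uniform measure $\mu$ on $G$. Since $\{I_n \pm E(i,i+1)\}$ generates $G$, every $g$ in the support of $Q$ admits a word representation in these generators, and we may always arrange the length to be odd (by appending a generator together with its inverse if necessary), which is the harmless normalization needed to keep the comparison consistent with parity.

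Second, I would identify the comparison constant. For any chosen odd-length word $g = z_1 \cdots z_{|g|}$ and any $f : G \to \mathbb{R}$, a telescoping sum together with Cauchy--Schwarz yields
\[
|f(x) - f(xg)|^2 \leq |g| \sum_{k=1}^{|g|} |f(x z_1 \cdots z_{k-1}) - f(x z_1 \cdots z_k)|^2.
\]
Multiplying by $Q(g)$, summing over $x \in G$ and over $g$ in the support of $Q$, and regrouping the right-hand side by which generator $z$ appears in position $k$, one obtains $\mathcal{E}_Q(f,f) \leq A \, \mathcal{E}_P(f,f)$, where $A$ is exactly the maximum displayed in the statement. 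This is precisely the constant entering Lemma 8 of \cite{Compar}.

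Applying that lemma then produces the stated bound directly. The main obstacle is bookkeeping: one must verify that the factor of $2$ in $t/(2A)$ and the normalization of $A$ by $1/P(z)$ both match the conventions of \cite{Compar} (the former arises from symmetrization when passing from the Dirichlet form to the iterated kernel, the latter from the reversibility weights). Once the constants are aligned, no further computation is needed.
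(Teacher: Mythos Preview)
Your proposal is correct and matches the paper exactly: the paper states this lemma as ``a direct application of Lemma~8 of \cite{Compar}'' and gives no further argument, so you have simply unpacked that citation by checking symmetry/reversibility and sketching the standard telescoping/Cauchy--Schwarz derivation of the Dirichlet-form inequality $\mathcal{E}_Q \le A\,\mathcal{E}_P$. One small slip worth fixing: appending a generator together with its inverse changes the word length by $2$ and hence preserves parity, so it cannot be used to force odd length; in the paper the odd-length condition is a hypothesis of the lemma, verified later by the explicit path constructions (which have odd length because $1$ and $a$ are odd).
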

Notice that each element of $C_i(\pm 1) \cup C_i(\pm a)$ must be expressed as a product of elements of the form $\{ I_n \pm E(i,i+1)\}$. It so happens that the paths considered in the following section are of odd length exactly because $a$ and one are odd integers.

\subsection{Building up $I_n+bE(i, i+2)$ in $O(\sqrt{b})$ steps}

At first, the goal is to create the element $I+E(i, i+2)$ which has the entry $1$ in position $(i, i+2)$. But that simply occurs by considering the following commutator:
$$ I+E(i, i+2)=[I+E(i+1, i+2), I-E(i,i+1)],$$
where $[x,y]= x^{-1}y^{-1}xy$ for $x,y \in G$.
For example, if $n=4$ and $i=2$, we have that:
$$
 \left[ \begin{matrix}
&1 & 0 & 0 & 0\\
&0 & 1 & 0 & 1\\
&0 & 0 & 1 & 0\\
&0 & 0 & 0 & 1
\end{matrix}  \right]=$$
$$
\left[ \begin{matrix}
&1 & 0 & 0 & 0\\
&0 & 1 & 0 & 0\\
&0 & 0 & 1 & -1\\
&0 & 0 & 0 & 1
\end{matrix} \right]
\left[ \begin{matrix}
&1 & 0 & 0 & 0\\
&0 & 1 & 1 & 0\\
&0 & 0 & 1 & 0\\
&0 & 0 & 0 & 1
\end{matrix} \right]
\left[ \begin{matrix}
&1 & 0 & 0 & 0\\
&0 & 1 & 0 & 0\\
&0 & 0 & 1 & 1\\
&0 & 0 & 0 & 1
\end{matrix} \right]
\left[ \begin{matrix}
&1 & 0 & 0 & 0\\
&0 & 1 & -1 & 0\\
&0 & 0 & 1 & 0\\
&0 & 0 & 0 & 1
\end{matrix} \right].
$$
Then just notice the following identity holds if the entries are over $\mathbb{R}$:
\begin{equation}\label{almost}
 I+ b E(i,i+2)=  [I+ \sqrt{b} E(i+1,i+2),I- \sqrt{b} E(i,i+1) ].
\end{equation}
This idea gives rise to the following lemma.
\begin{lemma}\label{root}
We can express $I_n+bE(i, i+2)$ as a word in the generators $S $ whose length is even and is at most $12\lfloor\sqrt{b} \rfloor +10$.
\end{lemma}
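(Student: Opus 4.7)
The plan is to emulate over $\mathbb{Z}/p\mathbb{Z}$ the real-variable identity \eqref{almost}, whose direct transcription fails only because $b$ need not be a square modulo $p$. I would write $b = s^2 + r$ with $s = \lfloor\sqrt{b}\rfloor$ and $0\leq r\leq 2s$, and handle the two pieces separately using the ambient commutator identity
\begin{equation*}
[I + \alpha E(i+1,i+2),\, I - \beta E(i,i+1)] \;=\; I + \alpha\beta\, E(i,i+2),
\end{equation*}
valid in $G$ for arbitrary $\alpha,\beta\in\mathbb{Z}/p\mathbb{Z}$. This follows by direct expansion from $E(i,i+1)E(i+1,i+2)=E(i,i+2)$, $E(i+1,i+2)E(i,i+1)=0$, and $E(i,i+1)^2=E(i+1,i+2)^2=0$, since every cross term except the one producing $\alpha\beta E(i,i+2)$ cancels.

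Taking $\alpha = \beta = s$ gives the four-factor expansion
\begin{equation*}
I + s^2 E(i,i+2) \;=\; (I - sE(i+1,i+2))(I + sE(i,i+1))(I + sE(i+1,i+2))(I - sE(i,i+1)),
\end{equation*}
and I would rewrite each factor $I\pm sE(k,k+1)$ as $s$ consecutive copies of the generator $I\pm E(k,k+1)$ (legal because $E(k,k+1)^2 = 0$), turning this into a word in $S$ of length $4s = 4\lfloor\sqrt{b}\rfloor$. Since $E(i,i+2)^2 = 0$ I may factor $I + bE(i,i+2) = (I + s^2 E(i,i+2))(I + r E(i,i+2))$, and I would build $I + rE(i,i+2)$ as $r$ consecutive copies of $I + E(i,i+2)$, each realized as the length-four commutator $[I + E(i+1,i+2),\, I - E(i,i+1)]$ obtained by setting $\alpha=\beta=1$ in the identity above. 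This second piece costs at most $4r \leq 8\lfloor\sqrt{b}\rfloor$ generators, so concatenation gives a total word of length at most $4\lfloor\sqrt{b}\rfloor + 4r \leq 12\lfloor\sqrt{b}\rfloor$, comfortably within the stated bound $12\lfloor\sqrt{b}\rfloor + 10$. Evenness of the total length is automatic since each of the constituent four-factor sub-words has even length.

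The construction is essentially formal, so I do not anticipate a deep obstacle. The main task is careful bookkeeping: checking that the factorization $(I+s^2E(i,i+2))(I+rE(i,i+2)) = I + bE(i,i+2)$ is legitimate via $E(i,i+2)^2 = 0$, tracking the contribution of each sub-block to the total length, and verifying that at every stage the word produced really does lie in the free monoid on $S$. The additive slack of $10$ in the stated bound comfortably absorbs any edge cases where $b$ is very small.
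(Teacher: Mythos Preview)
Your argument is correct and is essentially identical to the paper's: both split $b = \lfloor\sqrt{b}\rfloor^2 + r$, realize $I+\lfloor\sqrt{b}\rfloor^2 E(i,i+2)$ via the commutator \eqref{almost} expanded into $4\lfloor\sqrt{b}\rfloor$ generators, and realize $I+rE(i,i+2)$ as $r$ copies of the length-four commutator $[I+E(i+1,i+2),I-E(i,i+1)]$, yielding an even word of length at most $12\lfloor\sqrt{b}\rfloor$. Your write-up is in fact cleaner than the paper's on the bookkeeping of why each factor $I\pm sE(k,k+1)$ costs exactly $s$ generators and why the product $(I+s^2E(i,i+2))(I+rE(i,i+2))$ collapses correctly.
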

\begin{proof}
Equation \eqref{almost} is over $\mathbb{R}$. To get something similar over $\mathbb{Z}/p \mathbb{Z}$, consider the following identity 
\begin{align}\label{actual}
I+ bE(i,i+2)&= (I+(b- \lfloor\sqrt{b}\rfloor^2)E(i, i+2)) (I+ \lfloor\sqrt{b}\rfloor^2 E(i,i+2)),
\end{align}
where we use \eqref{almost} to get $E(i, i+2)) (I+ \lfloor\sqrt{b}\rfloor^2 E(i,i+2))$ and we write
$$(I+(b- \lfloor\sqrt{b}\rfloor^2)E(i, i+2))=[ (I +E(i, i+1) (I +E(i, i+2) )^{b- \lfloor\sqrt{b}\rfloor^2}].$$
The length of the word that occurs by \eqref{almost} and \eqref{actual} is at most $4(b- \lfloor\sqrt{b}\rfloor^2) + 4\lfloor\sqrt{b} \rfloor \leq 12\lfloor\sqrt{b} \rfloor +10 $. This means that we can achieve to express $I_n+bE(i, i+2)$ as a word in the generators $ S $, whose length is at most $O(\sqrt{b})$.  Notice that we expressed $I+ bE(i,i+2)$ as a product of one or two commutators, therefore the length of the word is even.
\end{proof}

\subsection{Building up $C_i(\pm 1)$}\label{first paths}
In this section, we show how to express an element of $C_i(\pm 1)$ as a word in the generators $S$.
\begin{lemma}\label{C_i(1)}
Let $B \in C_i(\pm 1)$. Then, $B$ can be expressed as a word in the generators $S$, whose length is odd and is at most $O(n\sqrt{p})$. Also, each generator $I \pm E(j,j+1)$ appears at most $O(\sqrt{p})$ times in such a word.
\end{lemma}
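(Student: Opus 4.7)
Recall from Section \ref{setup} that $B \in C_i(\pm 1)$ has the rank-one-update form
\[
B = I + uv^T, \qquad u = e_i + \sum_{\ell<i} a_\ell\, e_\ell, \qquad v = e_{i+1} + \sum_{k\ge 1} b_k\, e_{i+1+k},
\]
for some $a_\ell, b_k \in \mathbb{Z}/p\mathbb{Z}$. Every nonzero off-diagonal entry of $B$ other than the central $(i,i+1)$-entry lies at distance at least $2$ from the super-diagonal. The plan is therefore to write $B = X\,(I+E(i,i+1))\,X^{-1}\cdot W$ where the conjugator $X$ has only distance-$\ge 2$ entries and $W$ is a short product of distance-$\ge 2$ elementary matrices supplying the entries that cannot be created by $X$-conjugation alone. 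All distance-$\ge 2$ factors will be written in the generators using Lemma \ref{root} and an iterated version of it.

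The first step is to set $X = I + \sum_{\ell \le i-2} a_\ell E(\ell, i) - \sum_{k \ge 2} b_k E(i+1, i+1+k)$ and verify by direct computation that, because column $i$ of $X$ equals $e_i + \sum_{\ell\le i-2} a_\ell e_\ell$ and row $i+1$ of $X^{-1}$ equals $e_{i+1}^T + \sum_{k\ge 2} b_k e_{i+1+k}^T$, the conjugate equals
\[
B_{\mathrm{far}} := X(I+E(i,i+1))X^{-1} = I + E(i,i+1) + \sum_{\ell\le i-2} a_\ell E(\ell,i+1) + \sum_{k\ge 2} b_k E(i,i+1+k) + \sum_{\substack{\ell \le i-2 \\ k \ge 2}} a_\ell b_k E(\ell,i+1+k).
\]
Thus $B_{\mathrm{far}}$ matches $B$ on every entry except those in row $i-1$ and in column $i+2$, all of which still sit at distance $\ge 2$ from the super-diagonal.

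The second step is to build $X$ and $X^{-1}$ in the generators. Each nonzero entry of $X$ is $I + c E(j, j+d)$ with $d\ge 2$, and can be written via the commutator identity
\[
I + c\,E(j, j+d) = \bigl[I + c^{1/2} E(j, j+d-1),\; I + c^{1/2} E(j+d-1, j+d)\bigr],
\]
recursing on the first factor down to the base case $d=2$ handled by Lemma \ref{root}. Each entry therefore costs $O(\sqrt{|c|}) = O(\sqrt{p})$ generators, and $|X| = |X^{-1}| = O(n\sqrt{p})$. Inserting the middle letter $I+E(i,i+1)$ gives a word of odd length $O(n\sqrt p)$ representing $B_{\mathrm{far}}$.

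The third step is to multiply in the missing entries from the right:
\[
B = B_{\mathrm{far}} \cdot \prod_{m=i+1}^{n}\bigl(I + B_{i-1,m}\,E(i-1,m)\bigr) \cdot \prod_{\substack{r=1 \\ r\ne i-1}}^{i}\bigl(I + B_{r,i+2}\,E(r,i+2)\bigr).
\]
One checks that $B_{\mathrm{far}}$ has trivial columns $i-1$ and $i$, and that this property is preserved by the running product in the relevant direction, so each elementary factor is absorbed into the product cleanly (the cross terms vanish because $E(r,s)E(r',s')=0$ unless $s=r'$, and here $s \in \{i-1, i, i+2\}$ while $r'$ is always a row index of $B$). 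Each of these $O(n)$ correction factors has distance $\ge 2$ and cost $O(\sqrt p)$ by the same commutator construction, adding $O(n\sqrt p)$ generators. The commutator identity produces even-length words, so parity is controlled by the single central $(I + E(i,i+1))$; if after all constructions the word has even length, appending the null pair $(I+E(i,i+1))(I-E(i,i+1))$ fixes the parity without changing the element.

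The main obstacle is the bound that \emph{each} generator appears at most $O(\sqrt p)$ times. For a given $j$, the generator $I \pm E(j, j+1)$ is invoked only when the recursion for an entry $(r,s)$ with $r \le j < s$ passes through that super-diagonal position. Arranging the recursion so that the distance $d$ is roughly halved at each level (and so each generator is touched by the construction of only $O(1)$ entries) keeps the total usage per generator at $O(\sqrt p)$; verifying this bookkeeping for every choice of $a_\ell, b_k$ is the technical heart of the proof.
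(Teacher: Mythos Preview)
Your decomposition $B = X(I+E(i,i+1))X^{-1}\cdot W$ is reasonable, and Steps~1 and~3 are essentially fine, but Step~2 contains a genuine gap that breaks both bounds in the lemma. The displayed recursion
\[
I + cE(j,j+d) \;=\; \bigl[\,I + c^{1/2}E(j,j+d-1),\ I + c^{1/2}E(j+d-1,j+d)\,\bigr]
\]
lowers $d$ by one but, because a commutator uses each argument twice, doubles the length of the recursive factor. Unrolling gives
\[
T(d,c)\;=\;2T(d-1,c^{1/2}) + O(c^{1/2})
\;=\;2^{d-2}\,T\!\bigl(2,c^{1/2^{d-2}}\bigr) + \sum_{k=1}^{d-2} 2^{k}c^{1/2^{k}},
\]
and for the entries $E(\ell,i)$ and $E(i+1,i+1+k)$ of $X$ you need $d$ as large as $n-1$. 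Since $c^{1/2^{k}}\ge 1$, the tail of the sum alone is of order $2^{n}$, so a single entry of $X$ already costs exponentially many generators. Your final paragraph suggests instead ``halving $d$ at each level,'' but that is not the recursion you wrote, and even the balanced version $T(d,c)=4T(d/2,\sqrt c)$ yields $T(n,p)\asymp n^{2}p^{1/n}$; summed over the $O(n)$ entries of $X$ and $W$ this is $O(n^{3})$, not $O(n\sqrt p)$, and the per-generator bound fails as well. (Separately, $c^{1/2}$ has no meaning in $\mathbb Z/p\mathbb Z$ and would need the same floor-and-correct device as in Lemma~\ref{root}, but that is minor compared with the length blow-up.)

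The paper avoids ever building $I+cE(j,j+d)$ with $d>2$. It first forms the auxiliary matrix $A_i$ with an all-ones column above $(i,i+1)$ by conjugating $I+E(i,i+1)$ along the chain $I\pm E(j,j+1)$, costing $O(n)$ generators. It then corrects each entry of that column by conjugating with distance-two blocks $I+(a_j{-}1)E(j,j+2)$, each of cost $O(\sqrt p)$ via Lemma~\ref{root}. Finally, the rank-one structure of $B-I$ means every other column of the box is a scalar multiple of the first, so conjugating by further distance-two blocks $I+bE(j,j+2)$ manufactures the odd columns $i{+}3,i{+}5,\dots$ one at a time; the even columns are produced the same way in a separate matrix $B_2$, and $B=B_1B_2$. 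Because only generators and distance-two blocks are used, and any fixed $I\pm E(j,j+1)$ is touched by at most a bounded number of those blocks, both the $O(n\sqrt p)$ total length and the $O(\sqrt p)$ multiplicity per generator follow immediately. The missing idea in your argument is precisely this: route everything through distance-two elementary matrices rather than attempting to build long $E(j,j+d)$ directly.
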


To build an element $B$ of the conjugacy class of $I \pm E(i,i+1)$, the main idea is to build two matrices $B_1$ and $B_2$ in $G$ whose product is $B$. More specifically, the $i+1, i+3, i+5 \ldots $ columns of  $B_1$ are the same as the ones of $B$ and the rest zeros and similarly $B_2$ has the same $i+2,i+4, \ldots$ column as $B$ and the rest zeros. $B_1$ will have odd length when expressed as a word in the generators $S$, while $B_2$ will have even length. This way, $B$ will have indeed odd length, something that is needed to do comparison.

\paragraph*{Building up the odd columns.}
This section describes how to build $B_1$. 
\begin{definition}\label{A}
Let $A_i$ be the matrix that has ones on the diagonal and on the positions $(i,i+1), (i-1,i+1), \ldots (1,i+1)$.
\end{definition}
For example, if $n=6$ and $i=3$, we have that
\begin{align*}
&A_3= \left[ \begin{matrix}
&1 & 0 & 0 & 1 & 0 & 0\\
&0 & 1 & 0 & 1 & 0 & 0\\
&0 & 0 & 1 & 1 & 0 & 0\\
&0 & 0 & 0 & 1 & 0 & 0\\
&0 & 0 & 0 &  0 & 1 & 0 \\
& 0 &0 & 0 & 0 &  0 & 1 
\end{matrix}  \right].
\end{align*}
Firstly, notice that to express $A_i$ as a word in the generators $S$, conjugate $I+E(i,i+1)$ by $I-E(i-1,i)$ to get a one exactly above the position $(i,i+1)$. Continue conjugating by $I-E(j-1,j), j=2,3,\ldots i-1$ to get ones everywhere above the position $(i,i+1)$. This can be formally written as 
\begin{equation}\label{conA}
A_i=\left( \prod_{j=1}^{i-1} (I+E(j,j+1)) \right) (I+E(i,i+1)) \left( \prod_{j=1}^{i-1} (I-E(i-j,i-j+1))\right),
\end{equation}
where $\prod_{i=1}^n x_i= x_1x_2 \ldots x_n$.
\begin{example}
For $n=6$, we can get $A_3$ by conjugating $I+E(3,4)$ by $(I-E(2,3))(I-E(1,2))$:
\begin{align*}
 &A_3  = \left[ \begin{matrix}
&1 & 0 & 0 & 1 & 0 & 0\\
&0 & 1 & 0 & 1 & 0 & 0\\
&0 & 0 & 1 & 1 & 0 & 0\\
&0 & 0 & 0 & 1 & 0 & 0\\
&0 & 0 & 0 &  0 & 1 & 0 \\
& 0 &0 & 0 & 0 &  0 & 1 
\end{matrix}  \right]=
 \left[ \begin{matrix}
&1 & 1 & 0 & 0 & 0 & 0\\
&0 & 1  &0& 0 & 0 & 0\\
&0 & 0 & 1 & 0 & 0 & 0\\
&0 & 0 & 0 & 1 & 0 & 0\\
&0 & 0 & 0 &  0 & 1 & 0 \\
& 0 &0 & 0 & 0 &  0 & 1 
\end{matrix}  \right] \left[ \begin{matrix}
&1 & 0 & 0 & 0 & 0 & 0\\
&0 & 1  &1& 0 & 0 & 0\\
&0 & 0 & 1 & 0 & 0 & 0\\
&0 & 0 & 0 & 1 & 0 & 0\\
&0 & 0 & 0 &  0 & 1 & 0 \\
& 0 &0 & 0 & 0 &  0 & 1 
\end{matrix}  \right] \cdot\\
& \left[ \begin{matrix}
&1 & 0 & 0 & 0 & 0 & 0\\
&0 & 1 & 0 & 0 & 0 & 0\\
&0 & 0 & 1 & 1 & 0 & 0\\
&0 & 0 & 0 & 1 & 0 & 0\\
&0 & 0 & 0 &  0 & 1 & 0 \\
& 0 &0 & 0 & 0 &  0 & 1 
\end{matrix}  \right]
\left[ \begin{matrix}
&1 & 0 & 0 & 0 & 0 & 0\\
&0 & 1  &-1& 0 & 0 & 0\\
&0 & 0 & 1 & 0 & 0 & 0\\
&0 & 0 & 0 & 1 & 0 & 0\\
&0 & 0 & 0 &  0 & 1 & 0 \\
& 0 &0 & 0 & 0 &  0 & 1 
\end{matrix}  \right] 
\left[ \begin{matrix}
&1 & -1& 0 & 0 & 0 & 0\\
&0 & 1  &0& 0 & 0 & 0\\
&0 & 0 & 1 & 0 & 0 & 0\\
&0 & 0 & 0 & 1 & 0 & 0\\
&0 & 0 & 0 &  0 & 1 & 0 \\
& 0 &0 & 0 & 0 &  0 & 1 
\end{matrix}  \right]
\end{align*}
\end{example}

We now explain why the martix
$$ \left( \prod_{j=1}^{i-2 } \left(I-(a_j-1)E(j, j+2)\right) \right)^{-1}A_i\prod_{j=1}^{i-2 }\left(I-(a_j-1)E(j, j+2)\right),$$
has the same $i+1$ column as $B$.
Conjugating $A_i$ by $I-(a_1-1)E(1, 3)$ turns the $1$ in position $(1,i+1)$ into $a_1$ in $O(\sqrt{p})$ steps, as explained in Lemma \ref{root} and by the following computation:
\begin{align*}
& \left[ \begin{matrix}
&1 & 0 & 0 & a_1 & 0 & 0\\
&0 & 1 & 0 & 1 & 0 & 0\\
&0 & 0 & 1 & 1 & 0 & 0\\
&0 & 0 & 0 & 1 & 0 & 0\\
&0 & 0 & 0 &  0 & 1 & 0 \\
& 0 &0 & 0 & 0 &  0 & 1 
\end{matrix}  \right]= \\
&\left[ \begin{matrix}
&1 & 0 & a_1-1 & 0 & 0 & 0\\
&0 & 1 & 0 & 0 & 0 & 0\\
&0 & 0 & 1 & 0 & 0 & 0\\
&0 & 0 & 0 & 1 & 0 & 0\\
&0 & 0 & 0 &  0 & 1 & 0 \\
& 0 &0 & 0 & 0 &  0 & 1 
\end{matrix}  \right]\left[ \begin{matrix}
&1 & 0 & 0 & 1 & 0 & 0\\
&0 & 1 & 0 & 1 & 0 & 0\\
&0 & 0 & 1 & 1 & 0 & 0\\
&0 & 0 & 0 & 1 & 0 & 0\\
&0 & 0 & 0 &  0 & 1 & 0 \\
& 0 &0 & 0 & 0 &  0 & 1 
\end{matrix}  \right]
\left[ \begin{matrix}
&1 & 0 & -a_1+1 & 0 & 0 & 0\\
&0 & 1 & 0 & 0 & 0 & 0\\
&0 & 0 & 1 & 0 & 0 & 0\\
&0 & 0 & 0 & 1 & 0 & 0\\
&0 & 0 & 0 &  0 & 1 & 0 \\
& 0 &0 & 0 & 0 &  0 & 1 
\end{matrix}  \right]
\end{align*}
 Conjugating by elements of the form  $I-(a_j-1)E(j, j+2), j= 2,3\ldots i-2$ and at the end multiplying from the left by $I-(a_{i-1}-1)E(i-1, i+1)$ builds the first column of the box in $O(n \sqrt{p})$ steps. Conjugating doesn't change the parity of the length. Lemma \ref{root} says that multiplying by elements of the form 
$I-(a_{i-1}-1)E(i-1, i+1)$ doesn't affect the parity of the word either. That is so far we have a word of odd length.
\begin{example}\label{fcol}
The following calculation illustrates how we conjugate $A_3$ by $\prod_{j=1}^{i-2 }I-(a_j-1)E(j, j+2),$ to create the the column elements that we desire.
\begin{align*}
 \left[ \begin{matrix}
&1 & 0 & 0 & 2 & 0 & 0\\
&0 & 1 & 0 & 3 & 0 & 0\\
&0 & 0 & 1 & 1 & 0 & 0\\
&0 & 0 & 0 & 1 & 0 & 0\\
&0 & 0 & 0 &  0 & 1 & 0 \\
& 0 &0 & 0 & 0 &  0 & 1 
\end{matrix}  \right]= 
& \left[ \begin{matrix}
&1 & 0 & 0 & 0 & 0 & 0\\
&0 & 1 & 0 & 2 & 0 & 0\\
&0 & 0 & 1 & 0 & 0 & 0\\
&0 & 0 & 0 & 1 & 0 & 0\\
&0 & 0 & 0 &  0 & 1 & 0 \\
& 0 &0 & 0 & 0 &  0 & 1 
\end{matrix}  \right]
\left[ \begin{matrix}
&1 & 0 & 1 & 0 & 0 & 0\\
&0 & 1 & 0 & 0 & 0 & 0\\
&0 & 0 & 1 & 0 & 0 & 0\\
&0 & 0 & 0 & 1 & 0 & 0\\
&0 & 0 & 0 &  0 & 1 & 0 \\
& 0 &0 & 0 & 0 &  0 & 1 
\end{matrix}  \right]\\
&\left[ \begin{matrix}
&1 & 0 & 0 & 1 & 0 & 0\\
&0 & 1 & 0 & 1 & 0 & 0\\
&0 & 0 & 1 & 1 & 0 & 0\\
&0 & 0 & 0 & 1 & 0 & 0\\
&0 & 0 & 0 &  0 & 1 & 0 \\
& 0 &0 & 0 & 0 &  0 & 1 
\end{matrix}  \right]
\left[ \begin{matrix}
&1 & 0 & -1 & 0 & 0 & 0\\
&0 & 1 & 0 & 0 & 0 & 0\\
&0 & 0 & 1 & 0 & 0 & 0\\
&0 & 0 & 0 & 1 & 0 & 0\\
&0 & 0 & 0 &  0 & 1 & 0 \\
& 0 &0 & 0 & 0 &  0 & 1 
\end{matrix}  \right]\left[ \begin{matrix}
&1 & 0 & 0 & 0 & 0 & 0\\
&0 & 1 & 0 & -2 & 0 & 0\\
&0 & 0 & 1 & 0 & 0 & 0\\
&0 & 0 & 0 & 1 & 0 & 0\\
&0 & 0 & 0 &  0 & 1 & 0 \\
& 0 &0 & 0 & 0 &  0 & 1 
\end{matrix}  \right]
\end{align*}
\end{example}

Now conjugating by $I+b_3E(i+1,i+3)$ will force the third  column of the box to be exactly what we want. Continue conjugating by elements of the form $I+b_{j+2}E(j,j+2), j= i+1, i+3, \ldots$ to create the odd columns of the box in $O(n \sqrt{p})$ steps. 
\begin{example}
This example shows how to create the third column of the box in in $O( \sqrt{p})$ steps.
\begin{align*}
&\left[ \begin{matrix}
&1 & 0 & 0 & 2 & 0 & 10\\
&0 & 1 & 0 & 3 & 0 & 15\\
&0 & 0 & 1 & 1 & 0 & 5\\
&0 & 0 & 0 & 1 & 0 & 0\\
&0 & 0 & 0 &  0 & 1 & 0 \\
& 0 &0 & 0 & 0 &  0 & 1 
\end{matrix}  \right]= \\
& \left[ \begin{matrix}
&1 & 0 & 0 & 0 & 0 & 0\\
&0 & 1 & 0 & 0 & 0 & 0\\
&0 & 0 & 1 & 0 & 0 & 0\\
&0 & 0 & 0 & 1 & 0 & -5\\
&0 & 0 & 0 &  0 & 1 & 0 \\
& 0 &0 & 0 & 0 &  0 & 1 
\end{matrix}  \right]
\left[ \begin{matrix}
&1 & 0 & 0 & 2 & 0 & 0\\
&0 & 1 & 0 & 3 & 0 & 0\\
&0 & 0 & 1 & 1 & 0 & 0\\
&0 & 0 & 0 & 1 & 0 & 0\\
&0 & 0 & 0 &  0 & 1 & 0 \\
& 0 &0 & 0 & 0 &  0 & 1 
\end{matrix}  \right]
\left[ \begin{matrix}
&1 & 0 & 0 & 0 & 0 & 0\\
&0 & 1 & 0 & 0 & 0 & 0\\
&0 & 0 & 1 & 0 & 0 & 0\\
&0 & 0 & 0 & 1 & 0 & 5\\
&0 & 0 & 0 &  0 & 1 & 0 \\
& 0 &0 & 0 & 0 &  0 & 1 
\end{matrix}  \right]
\end{align*}
\end{example}

\paragraph*{Building up the even columns.}
The next step creates the even columns in a separate, new matrix $B_2$. We begin with building up $A_{i+1}$, (see Definition\ref{A}), following the construction of \eqref{conA}. Imitate the construction of the first column to create the second column of the box as wished.   
Then multiply with $I-E(i+1,i+2)$ to get rid of the $1$ in position $(i+1,i+2)$. And then conjugate by $I+b_jE(j+2,j+4)$ for $j\geq 1$ as many times as needed to create the even columns. This way $B_2$ is expressed as a word in the generators $S$ of even length of order $O(n\sqrt{p})$.
\paragraph*{Building up $B$.}
At this point we notice that $B=B_1B_2$. This gives the conjugacy class wanted in $O(n\sqrt{p})$ steps and each generator has multiplicity at most $O(\sqrt{p})$.

The following example illustrates how an element of the conjugacy class of $S$ can be obtained if the even columns and the odd columns are constructed in two separate matrices.
\begin{example} In this example, we illustrate why the product of $B_1$ and $B_2$ is $B$.
\begin{align*}
& \left[ \begin{matrix}
&1 & 0 & 0 & 2 & -4 & 10\\
&0 & 1 & 0 & 3 & -6 & 15\\
&0 & 0 & 1 & 1 & -2 & 5\\
&0 & 0 & 0 & 1 & 0 & 0\\
&0 & 0 & 0 &  0 & 1 & 0 \\
& 0 &0 & 0 & 0 &  0 & 1 
\end{matrix}  \right]= 
 \left[ \begin{matrix}
&1 & 0 & 0 & 2 & 0 & 10\\
&0 & 1 & 0 & 3 & 0 & 15\\
&0 & 0 & 1 & 1 & 0 & 5\\
&0 & 0 & 0 & 1 & 0 & 0\\
&0 & 0 & 0 &  0 & 1 & 0 \\
& 0 &0 & 0 & 0 &  0 & 1 
\end{matrix}  \right]
& \left[ \begin{matrix}
&1 & 0 & 0 & 0 & -4 & 0\\
&0 & 1 & 0 & 0 & -6 & 0\\
&0 & 0 & 1 & 0 & -2 & 0\\
&0 & 0 & 0 & 1 & 0 & 0\\
&0 & 0 & 0 &  0 & 1 & 0 \\
& 0 &0 & 0 & 0 &  0 & 1 
\end{matrix}  \right]
\end{align*}
\end{example}
\begin{remark}
The construction presented in this section does not work for the conjugacy class of $C_1(\pm 1)$. But in this case, bulding up the first column of the box is easy, because we can begin with $A_1=I\pm E(1,2)$, which has the same first column as $B$. Then we continue building $B_1 $ as explained directly above Example \ref{fcol}. $B_2$ is built on the same way as before.
\end{remark}

\subsection{Building up the conjugacy class of $C_i( \pm a)$}
In this section, we explain how to express an element of $C_i(\pm a)$ as a word in the generators $S$.
\begin{lemma}\label{C_i(a)}
Let $B \in C_i(\pm a)$. Then $B$ can be expressed as a word in the generators $S$, whose length is odd and is at most $O(n\sqrt{p})$. Also, each generator $I \pm E(j,j+1)$ appears at most $O(\sqrt{p})$ times in such a word.
\end{lemma}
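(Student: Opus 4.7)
I plan to adapt the construction in the proof of Lemma \ref{C_i(1)} with minimal modifications. Write $B = B_1 B_2$, where $B_1$ agrees with $B$ on the columns $i+1, i+3, \ldots$ and is the identity elsewhere, while $B_2$ agrees with $B$ on columns $i+2, i+4, \ldots$. The goal, as before, is to express $B_1$ as a word of odd length $O(n\sqrt{p})$ and $B_2$ as a word of even length $O(n\sqrt{p})$, so that $B = B_1 B_2$ has odd length $O(n\sqrt{p})$, with each generator occurring $O(\sqrt{p})$ times.

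The essential modification is the starting matrix. Let $A_i(a)$ be the matrix with $a$ in positions $(1,i+1), (2,i+1), \ldots, (i,i+1)$, ones on the diagonal, and zeros elsewhere. I would construct $A_i(a)$ by replacing $(I+E(i,i+1))$ with $(I+E(i,i+1))^a = I+aE(i,i+1)$ in formula \eqref{conA}:
$$A_i(a) = \left(\prod_{j=1}^{i-1}(I+E(j,j+1))\right)(I+aE(i,i+1))\left(\prod_{j=1}^{i-1}(I-E(i-j,i-j+1))\right).$$
The word length of this expression is $2(i-1) + a$. Since $a$ is odd by construction, this has the same (odd) parity as the original word for $A_i$, and length $O(n + \sqrt{p})$.

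From $A_i(a)$ I would follow the construction in Lemma \ref{C_i(1)} verbatim, but rescaling every conjugation coefficient by $a^{-1} \bmod p$. Conjugating $A_i(a)$ by $I - c_j E(j,j+2)$ with $c_j = (a_j - a)a^{-1}$ changes the $(j,i+1)$ entry from $a$ to $a_j$; left-multiplying by a term of the form $I + c_{i-1} E(i-1,i+1)$ fixes the $(i-1,i+1)$ entry; and subsequently conjugating by $I + d_k E(i+1,i+3)$ with $d_k = b_2 a^{-1}$ sets the $(i, i+3)$ entry to $b_2$, while automatically producing block entries $(j,i+3) = a_j d_k = a^{-1} a_j b_2$. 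This matches the description of $C_i(\pm a)$ in Section \ref{setup}, where the block entries are $x^{-1} a_j b_k$ with $x = \pm a$. Because each conjugator still has coefficient in $\mathbb{Z}/p\mathbb{Z}$, Lemma \ref{root} guarantees length $O(\sqrt{p})$ and even parity per conjugation. Iterating over $O(n)$ conjugations to build $B_1$, and constructing $B_2$ analogously as in the even-column step of Lemma \ref{C_i(1)}, yields $B = B_1 B_2$ of odd length $O(n\sqrt{p})$. Each generator appears $O(\sqrt{p})$ times, with $I + E(i,i+1)$ in particular appearing $a + O(1) = O(\sqrt{p})$ times through the initial $(I+E(i,i+1))^a$ factor.

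The main obstacle will be the $a^{-1}$-rescaled bookkeeping: checking that iterated conjugation with coefficients $d_k = b_k a^{-1}$ really does produce the block entries $a^{-1} a_j b_k$, and that the total word length remains odd throughout. Both points reduce to direct matrix calculations in the spirit of the examples given in the proof of Lemma \ref{C_i(1)}, and crucially rely on the fact that $a$ is odd so that replacing $I + E(i,i+1)$ by $(I + E(i,i+1))^a$ preserves parity.
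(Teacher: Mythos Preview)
Your proposal is correct and follows essentially the same route as the paper. The only cosmetic difference is where the extra copies of $I+E(i,i+1)$ are inserted: the paper first builds $A_i$ and fills in the entries $a_1,\ldots,a_{i-1}$ exactly as in Lemma~\ref{C_i(1)}, then left-multiplies by $(I+E(i,i+1))^{a-1}$ to turn the $(i,i+1)$ entry into $a$, after which both arguments conjugate by $I+\tfrac{b_2}{a}E(i+1,i+3)$ and proceed identically; your variant simply moves those $a$ copies to the start inside $A_i(a)$ and absorbs the resulting $a^{-1}$ into the conjugation coefficients, which is harmless since Lemma~\ref{root} only needs the coefficient to lie in $\mathbb{Z}/p\mathbb{Z}$.
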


\begin{proof}
To construct the conjugacy class of $I+ a E(i,i+1)$, things are similar to Section \ref{first paths}. Let $B \in C_i( \pm a)$. We start by constructing $A_i$, as indicated by \eqref{conA} in at most $2n+1$ steps. Then we create the entries in positions $(j,i+1), j \in \{1,\ldots, i-1\}$, just as described directly above Example \ref{fcol}. Then, we multiply by $I+ (a-1) E(i,i+1)$ from the left to set the entry on position $(i,i+1)$ equal to $a$. This is illustrated as
\begin{align*}
& \left[ \begin{matrix}
&1 & 0 & 0 & a_1 & 0 & 0\\
&0 & 1 & 0 & a_2 & 0 & 0\\
&0 & 0 & 1 & a & 0 & 0\\
&0 & 0 & 0 & 1 & 0 & 0\\
&0 & 0 & 0 &  0 & 1 & 0 \\
& 0 &0 & 0 & 0 &  0 & 1 
\end{matrix}  \right]= (I+ (a-1) E(i,i+1) )  \left[ \begin{matrix}
&1 & 0 & 0 & a_1 & 0 & 0\\
&0 & 1 & 0 & a_2 & 0 & 0\\
&0 & 0 & 1 & 1 & 0 & 0\\
&0 & 0 & 0 & 1 & 0 & 0\\
&0 & 0 & 0 &  0 & 1 & 0 \\
& 0 &0 & 0 & 0 &  0 & 1 
\end{matrix}  \right].
\end{align*}
Now conjugate by $(I+ E(i,i+2))^{\frac{b_2}{a}}= I+ \frac{b_2}{a} E(i,i+2)$ to get exactly the third column. Notice that the third column of the box occurs by multiplying the first column of the box by $\frac{b_2}{a}$.
\begin{align*}
& \left[ \begin{matrix}
&1 & 0 & 0 & a_1 & 0 & a_1b_2a^{-1} \\
&0 & 1 & 0 & a_2 & 0 & a_2b_2a^{-1} \\
&0 & 0 & 1 & a & 0 &b_2 \\
&0 & 0 & 0 & 1 & 0 & 0\\
&0 & 0 & 0 &  0 & 1 & 0 \\
& 0 &0 & 0 & 0 &  0 & 1 
\end{matrix}  \right]= (I+\frac{b_2}{a}  E(i,i+2) )   \left[ \begin{matrix}
&1 & 0 & 0 & a_1 & 0 & 0\\
&0 & 1 & 0 & a_2 & 0 & 0\\
&0 & 0 & 1 & a & 0 & 0\\
&0 & 0 & 0 & 1 & 0 & 0\\
&0 & 0 & 0 &  0 & 1 & 0 \\
& 0 &0 & 0 & 0 &  0 & 1 
\end{matrix}  \right] (I-\frac{b_2}{a} E(i,i+2) ).
\end{align*}
Conjugating by $(I-\frac{b_4}{b_2} E(i+2,i+4) ),$ we get the the fifth column. Continuing this way, we construct $B_1$, the matrix of the odd columns. $B_2$ is constructed just like in Section \ref{first paths}. Therefore, we have that $B=B_1B_2$. 
Notice that again, the length of the word is at most $O(n\sqrt{p})$ and the multiplicity of each generator is at most $O(\sqrt{p})$.

\end{proof}
\begin{remark}
The construction presented in this section does not work for the conjugacy class of $C_1(\pm a)$. But in this case, bulding up the first column of the box is easy, because we can begin with $A_1=I\pm a E(1,2)$, which has the same first column as $B$. Then we continue building $B_1 $ as explained directly above Example \ref{fcol}. $B_2$ is built on the same way as before.
\end{remark}

\section{Proof of Theorem \ref{main}.}\label{proofmain}
\begin{proof}
Lemmas \ref{comp}, \ref{C_i(1)} and \ref{C_i(a)} prove that $A= O( p n^2) $. We consider $A=kpn^2$, where $k$ is a universal constant. Therefore, Lemma \ref{comp} gives that there exist universal constants $0<b,d< \infty$ such that  for $c>2$ and $t\geq c b p^2n^4$, we have that
$$|G|||P^{*t}- \mu||^2_2 \leq |G| \left(  e^{-t/A}+ ||Q^{*t/2A}- \mu||^2_2 \right) \leq $$
$$ p^{n^2}e^{-c k n^2p}+\alpha e^{-c}\leq d e^{-c},$$
since $p^{n^2}e^{-c k n^2p} \leq e^{-c}$.
\end{proof}

\section{Acknowledgments}
I would like to thank Persi Diaconis for all the useful comments and guidance he provided. 

This paper is adapted from my PhD thesis, for the completion of which I was supported by Stanford University.
\bibliographystyle{plain}
\bibliography{second_version}
\end{document}